\definecolor{tgcolor}{rgb}{0.2,0.6,0.8}
\definecolor{nmcolor}{rgb}{0., .7, 0.}
\title{Non-local Optimization: Imposing Structure on Optimization Problems by Relaxation}
\author{
{\normalfont
\begin{tabular}[t]{c@{\extracolsep{8em}}c} 
{\bfseries Nils M{\"u}ller} & {\bfseries Tobias Glasmachers} \\
{\itshape Ruhr University Bochum} & {\itshape Ruhr University Bochum} \\ 
{\itshape Germany} & {\itshape Institute for Neural Computation} \\
 & {\itshape Germany} \\
 \href{mailto:mail@nilsmueller.io}{\texttt{mail@nilsmueller.io}} & \href{tobias.glasmachers@ini.rub.de}{\texttt{tobias.glasmachers@ini.rub.de}}
\end{tabular}
}
}
\begin{document}
\maketitle

\begin{abstract}
      In stochastic optimization, particularly in evolutionary computation and reinforcement learning, the optimization of a function $f: \Omega \to \mathbb{R}$ is often addressed through optimizing a so-called relaxation $\theta \in \Theta \mapsto \E_\theta(f)$ of $f$, where $\Theta$ resembles the parameters of a family of probability measures on $\Omega$. We investigate the structure of such relaxations by means of measure theory and Fourier analysis, enabling us to shed light on the success of many associated stochastic optimization methods. The main structural traits we derive and that allow fast and reliable optimization of relaxations are the consistency of optimal values of $f$, Lipschitzness of gradients, and convexity. We emphasize settings where $f$ itself is not differentiable or convex, e.g., in the presence of (stochastic) disturbance.
\end{abstract}

\keywords{Global optimization \and Stochastic optimization \and Evolution strategies \and Robust optimization}

\section{Introduction}

The idea of an optimization problem is to find minima (or maxima) of a function $f: \Omega \to \R$ in a set of possible candidates $\Omega$. Given a $\sigma$-algebra $\mathcal{A}$ on $\Omega$ we can understand a probability measure $\Ps{}$ on $(\Omega, \mathcal{A})$ as a generalized candidate of $f$. A generalized candidate $\Ps{}$ induces a functional to which we assign its expected value $\E(f)$. A parameterization $\Ps{\cdot}: \Theta \to \mathcal{M}_1^+(\Omega, \mathcal{A})$ of (a subset of) the generalized candidates, where $\Theta$ is the parameter set and $\mathcal{M}_1^+(\Omega, \mathcal{A})$ is the set of probability measures on $(\Omega, \mathcal{A})$, results in what is called a \textbf{stochastic relaxation} $\theta \in \Theta \mapsto \E_\theta(f)$.
In this paper, we investigate the central structural properties of relaxed problems that are of interest to optimization: consistency, smoothness, and convexity.

\paragraph{Algorithms.}
Our work applies to a broad range of optimization algorithms, including several randomized search heuristics (variants of estimation of distribution methods) for discrete and continuous search spaces. The second half of our paper is heavily influenced by applications in gradient-based optimization, covering in particular information geometric optimization \cite{ollivier2017information}.
The two most important use-cases in the regime of gradient-based optimization are the following:
\begin{itemize}
    \item $\Theta$ is an open subset of $\R^n$ and the measures have $\theta$-differen\-tiable densities with respect some measure $\mu$, i.e. $\Ps{\theta} = k_\theta \mu$.
    \item $\Omega=\Theta=\R^n$, $\mathcal{A}$ is the Borel $\sigma$-algebra of $\R^n$, i.e., $\mathcal{A} = \mathcal{B}(\R^n)$, $f$ is differentiable, and $\Pr_\theta \circ \psi_\theta = \Ps{0}$,
    where $\psi_\theta: \R^n \to \R^n, x \mapsto x+\theta$.
\end{itemize}
Under mild regularity assumptions, which we will cover, we have the identity
\[
   \nabla_\theta \E_\cdot(f) \equiv \E_\cdot(f \nabla_\theta \ln k_\cdot)
\]
in the first case and
\[
   \nabla_\theta \E_\cdot(f) \equiv \E_\cdot( \nabla_x f)
\]
in the second. In both cases, the gradient of the relaxation can, therefore, be approximated by numerical integration.

\paragraph{Motivation and Related Work.}
Stochastic relaxations have long provided a powerful approach to optimization problems where non-local structure is significant, problem representations are not easy to manipulate (e.g., through differentiation), or robust solutions are desired. In recent literature, there has been considerable progress on qualitative and quantitative assessment of optimization methods of such relaxations' parameter spaces, particularly concerning their convergence on problems where local structure is instructive \cite{hgs2014igo, nesterov2017random, zhang2018rie, maggiar2018trustregion}. Even before, there have been numerous proposals of frameworks that guide the principled design of optimization methods for a wide range of discrete and continuous problems \cite{wierstra2008nes, ollivier2017information, malago2009sr}. The use of stochastic relaxations, however, has been motivated by invariance of solutions under transformations \cite{ollivier2017information}, biological plausibility of evolutionary computation \cite{wierstra2008nes}, program simplicity, as well as practical utility \cite{nesterov2017random, salimans2017evolution}, or by experiment \cite{choro2019asebo, salimans2017evolution}. Yet the favorable structure of stochastic relaxations for optimization itself has received a rather incidental treatment in favor of different questions. Outstanding from the rest of the literature, \cite{mobahi2012gaussian,mobahi2015theoretical,loog2001critical} derive insightful and principled properties of special relaxations under decay and convexity conditions that arguably deviate from the classical setting in optimization. Furthermore, deep connections of Gaussian relaxations and approximation have been established in \cite{mobahi2015link}. Surprisingly, much of the referenced prior work seems to have been compiled in virtually independent communities.\\

In this work, we investigate the relation between a problem and its stochastic relaxations and derive a systematic understanding of why stochastic relaxations are favorable for fast and reliable optimization in practice.

\begin{figure}[!ht]
\makebox[\textwidth][c]{
    \begin{tikzpicture}
        \begin{axis}[scale only axis=true,axis lines=none,at={(0,0)},xmin=-4.2,xmax=4.2,ymin=-2.5,ymax=2.5,height=7.cm,width=11.76cm]
            
            \filldraw[line width=2.0pt, draw=black, fill=white] (-4,-1.5) rectangle (-1,1.5);
            
            \foreach \i in {1,2,3,4,...,112}{
                \addplot[shift={(axis direction cs:-2.5,0)},black,->-=0.5] table[]{data/data-non-conv/rg\i.txt};
            }

        
            \node[anchor=north] at (-2.5,-1.65) {\large $\nabla f$};

            
            \filldraw[line width=2.0pt, draw=black,fill=white] (1,-1.5) rectangle (4,1.5);
            \foreach \i in {1,2,3,4,...,64}{
                \addplot[shift={(axis direction cs:2.5,0)},black,->-=0.5] table[]{data/data-conv/rg\i.txt};
            }
            \node[anchor=north] at (2.5, -1.65) {\large $\nabla \E_{\cdot}(f)$};
            \path[ultra thick, ->,>=latex] (-1,0) edge  node[above, yshift=0.3] {\large $\E_\cdot^*$} (1,0);
        \end{axis}
    \end{tikzpicture}
    }
    \caption{Gradient flows of the Rastrigin function and its isotropic Gaussian relaxation.}
\end{figure}
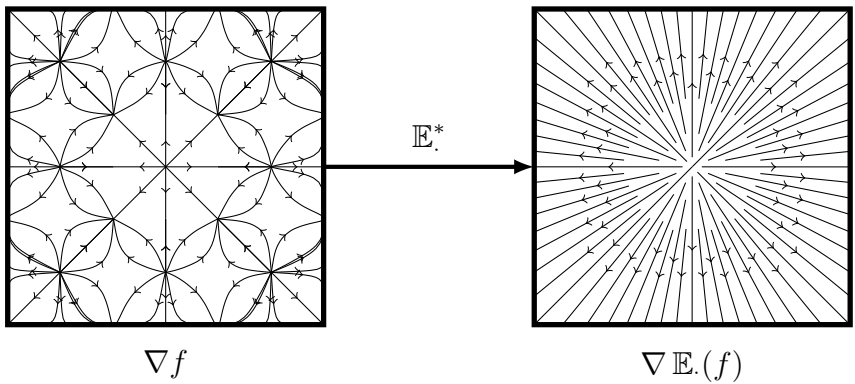

\paragraph{Outline.}
In \cref{sec:Transforming an Optimization Problem}, we showcase the structure of a stochastic relaxation of the Rastrigin function, a popular highly multi-modal benchmark problem in optimization. We also introduce the general problem definition covering discrete and continuous spaces. In \cref{sec:Consistency in the Codomain}, a criterion for the consistency of optimal function values and the location of optima is established. In \cref{sec:Imposing Differentiability}, we derive representations of derivatives of stochastic relaxations. Among this we cover the transfer of (Lipschitz) differentiability from parameterized densities to the stochastic relaxation. Here we tie together many notions of differentiability that are used in practice. In \cref{sec:Weak Convexity}, we derive insightful sufficient conditions under which functions on $\R^n$ have a convex stochastic relaxation based on a notion of weak convexity and Fourier analysis.

\section{Transforming an Optimization Problem}
\label{sec:Transforming an Optimization Problem}

In this section, we will demonstrate by example, what we will establish in later sections for more general cases and what can be understood as the driving factors for the success of stochastic relaxations. We chose a popular benchmark problem from optimization literature, the Rastrigin function, and multivariate isotropic normal distributions as parameterized probability measures. This setting is typical in stochastic optimization, particularly with evolution strategies. More concretely, we want to illustrate the following traits of this particular stochastic relaxation:
\begin{itemize}
    \item Consistency of the minimum of the original problem,
    \item Lipschitz continuity of gradients, and
    \item convexity.
\end{itemize}
The above traits will serve to guide our intuition in the subsequent sections.

\begin{example}
\label{ex:Rastrigin}
Consider the Rastrigin function
\[
    x \in \R^n \longmapsto f(x) := \norm{x}^2 - \sum_{j=1}^n a_j \cos( \xi_j x_j) \,,
\]
where $\xi_j, a_j > 0$ for all $j \in \N_{\leq n}$.
Let $\Ps{\theta, \sigma} \sim \mathcal{N}(\theta, \sigma^2 I)$, where $\theta \in \R^n$ and $\sigma > 0$. Rastrigin's function is an example of a highly multi-modal optimization problem. Its global minimum is found at the origin, while an exponentially large (in the problem dimension) number of local minima represent distractors to local optimization methods.
\\

We will now look at the structure of the function
\[
    \theta \in \R^n \longmapsto \E_{\theta, \sigma}(f) \,,
\]
which we interpret as a relaxation on the original candidate space $\R^n$ of $f$. We find the following traits.
\begin{itemize}
    \item We can pick a small $\sigma$ such that we can approximate the value of $f(x)$ with $\E_{x, \sigma}(f)$ with arbitrary precision---in particular, at the global minimum of $f$ at $0$.
    \item Structural traits such as the smoothness of the density of the normal distribution are transferred to $\E_{\cdot}(f)$. Moreover, the gradients of $\theta \mapsto \E_{\theta, \sigma}(f)$ are Lipschitz for all $\sigma > 0$.
    \item As the following arguments will show, we can pick $\sigma^*$ such that $\theta \mapsto \E_{\theta, \sigma}(f)$ is strictly convex for all $\sigma > \sigma^*$.
\end{itemize}
In our specific example we obtain the traits in terms of handy closed-form formulas:
\begin{enumerate}
\item Let $\lambda$ denote the Lebesgue measure of $\R^n$. For all $\theta \in \R^n$ and $\sigma > 0$, we have
\begin{align*}
    \E_{\theta, \sigma}(\norm{\cdot}^2)
    &=
    \int_{\R^n} \sum_{k=1}^n x_k^2 \tfrac{1}{\sqrt{(2 \pi \sigma^2 )^n}}\exp\left( -\tfrac{1}{2 \sigma^2}\norm{\theta- x}^2 \right) \lambda(\mathrm{d}x)
\tag*{\small{\text{(definitions)}}}
    \\
    &=
    \sum_{k=1}^n \int_{\R^n} x_k^2 \tfrac{1}{\sqrt{(2 \pi \sigma^2 )^n}}\exp\left( -\tfrac{1}{2 \sigma^2} \norm{\theta- x}^2 \right) \lambda(\mathrm{d}x)
\tag*{\small{\text{(linearity)}}}
    \\
    &=
    \sum_{k=1}^n \int_{\R} x_k^2 \tfrac{1}{\sqrt{2 \pi \sigma^2}}\exp\left( -\tfrac{1}{2 \sigma^2} (\theta_k- x_k)^2 \right) \lambda(\mathrm{d}x_k)
\tag*{\small{\text{(Fubini, linearity, and probability measure)}}}
    \\
    &=
    n\sigma^2 + \norm{\theta}^2 \,.
\tag*{\small{\text{(Steiner's Theorem)}}}
\end{align*}

\item Let $j \in \N_{\leq n}$ and define $\Xi_j := \xi_j e_j$, then, using the representation of the characteristic function of the multivariate normal distribution in the third equality, we obtain
\begin{align*}
    \E_{\theta, \sigma}(\cos(\xi_j (\cdot)_j))
    &=
    \int_{\R^n} \mathrm{Re} \left( e^{i\langle \Xi_j, x \rangle} \right) \, \Ps{\theta, \sigma}(\mathrm{d}x)
\tag*{\small{\text{(definitions)}}}
    \\
    &=
    \mathrm{Re} \bigg( \int_{\R^n} e^{i\langle \Xi_j, x \rangle}  \, \Ps{\theta, \sigma}(\mathrm{d}x)\bigg)
\tag*{\small{\text{(definition of the complex integral)}}}
    \\
    &=
    \mathrm{Re} \left( \exp\left( i \langle \Xi_j, \theta \rangle - \tfrac{1}{2} \sigma^2 \norm{\Xi_j}^2 \right) \right)
\tag*{\small{\text{(characteristic function of the multivariate normal)}}}
    \\
    &=
    \mathrm{Re}\left(\exp( i \xi_j\theta_j)\right) \exp\big(- \tfrac{1}{2} \sigma^2 \xi_j^2\big)
\tag*{\small{\text{(definitions and exponential rule)}}}
    \\
    &=
    \cos(\xi_j\theta_j)\exp\big(- \tfrac{1}{2} \sigma^2 \xi_j^2\big) \,.
\tag*{\small{\text{(definition of $\cos$)}}}
\end{align*}
\end{enumerate}
Therefore, by linearity of the integral, we have
\begin{align*}
    \E_{\theta, \sigma}(f)
    &=
    n\sigma^2 + \norm{\theta}^2 - \sum_{j=1}^n a_j \cos(\xi_j\theta_j)\exp\big(- \tfrac{1}{2} \sigma^2 \xi_j^2\big) \,.
\end{align*}
Now, we can check strict convexity by looking at the Hessian. The second partial derivatives of the relaxation are given by
\[
    \partial_{\theta_j, \theta_k} \E_{\theta, \sigma}(f) = \delta_{jk} \cdot \big(2  + a_j \xi_j^2 \cos(\xi_j \theta_j) \exp\big( -\tfrac{1}{2}\sigma^2 \xi_j^2\big)\big) \,,
\]
where $\delta_{jk}$ is the Kronecker delta.
Thus, the Hessian is diagonal with strictly positive values at $\theta \in \R^n$ if and only if for all $j \in \N_{\leq n}$
\[
    -2 < a_j \xi_j^2 \cos(\xi_j \theta_j) \exp\big( -\tfrac{1}{2}\sigma^2 \xi_j^2\big) \,.
\]
As $\cos{(\xi_j\theta_j)}\geq -1$, we know that this is the case at least if
\[
    2 > a_j \xi_j^2 \exp\big(-\tfrac{1}{2}\sigma^2 \xi_j^2\big) \iff \tfrac{2}{\xi_j^2}\log\big(\tfrac{a_j \xi_j^2}{2}\big) < \sigma^2.
\]
Picking ${\sigma^*}^2 > \max_{j \in \N_{\leq n}} \tfrac{2}{\xi_j^2}\log\big(\tfrac{a_j \xi_j^2}{2} \big)$, we obtain the result.
\end{example}

The above result outlines the very structure that can be exploited for optimization by first-order optimization methods \cite{bubeck2015convex}. In some practical settings, a gradient method on the mean parameter $\theta$ may be augmented by a manipulation of the standard deviation $\sigma$. However, keeping $\sigma$ large improves robustness.

What follows is the general setting used throughout the work. To this end, let $\mathcal{L}^1(\Omega, \mathcal{A}, \Ps{\theta})$ denote the at least once Lebesgue integrable, real-valued functions on the measure space $(\Omega, \mathcal{A}, \Ps{\theta})$.

\begin{definition}[Stochastic Relaxation]
\label{def:relProb}
Given
\begin{itemize}

    \item a family of probability measures $\, \{ \Ps{\theta} : \theta \in \Theta \}$ on a measure space $(\Omega, \mathcal{A})$,

    \item an optimization problem
    $ \,\, \displaystyle
        \min_{x \in \Omega} \,\, f(x) \,\,, \, \text{where} \,\, f: \Omega \to \R\
    $, and
    
    \item $f \in \mathcal{L}^1(\Omega, \mathcal{A}, \Ps{\theta})\,, \,\, \forall \theta \in \Theta$,
\end{itemize}
we call the problem
\[
    \min_{ \theta \in \Theta } \,\, \E_\theta (f)
        =
    \min_{ \theta \in \Theta } \,\, \int_{\Omega} f \, \mathrm{d} \Ps{\theta} \,,
\]
the \textbf{stochastic relaxation} of $f$ for which we write $(f, \Omega, \mathcal{A}, (\Ps{\theta})_{\theta \in \Theta})\,$. In case $\Omega$ has a metric $d$ such that $\mathcal{B}(d) = \mathcal{A}$ we write
$(f, d, \Omega, \mathcal{A}, (\Ps{\theta})_{\theta \in \Theta})$.\footnote{We denote the Borel $\sigma$-algebra of a topological space $\Omega$ by $\mathcal{B}(\Omega)$. If $\Omega$ admits a metric $d$, we also write $\mathcal{B}(d)$.}
\end{definition}

\begin{remark}
    In principle, there is no need for requiring the measures $(\Ps{\theta})_{\theta \in \Theta}$ to be non-negative. One could define a relaxed problem analogously by allowing signed measures. In this case however, the relation of the relaxed problem and the original problem in
    \cref{sec:Consistency in the Codomain} or the results of \cref{sec:Weak Convexity}
    will in general not hold.
\end{remark}

Inspired by the previous example, we will now aim to generalize the traits of \cref{ex:Rastrigin} to fit the picture of the practical use of stochastic relaxations.

\section{Consistency in the Codomain}
\label{sec:Consistency in the Codomain}

Manipulating an optimization problem raises the question of whether solutions or the cost of solutions will relate to those of the original optimization problem defined by a function $f: \Omega \to \R$.

We can observe that some distinguished generalized candidates can be associated with a set of close-to-optimal candidates of $f$ by their function value and mass distribution in candidate space. Formally, we define the following.

\begin{definition}[$f$-consistent]
    Let $(f, d, \Omega, \mathcal{A}, (\Ps{\theta})_{\theta \in \Theta})$ be a relaxation as defined in \cref{def:relProb} and let $f: \Omega \to \R$ have a unique global minimum at $x^*$. We call the relaxation $f$-\textbf{consistent} if
    \[
        \forall x \in \Omega\,,\,\, x \neq x^* : \exists \theta^* \in \Theta:
     \,\, \E_{\theta^*}(f) < f(x) \,.
    \]
\end{definition}

We will learn that requiring the Dirac measure at the global optimum of $f$ to be a limit candidate of the relaxation in conjunction with a bounding attribute of $f$ is sufficient for a relaxation to be $f$-consistent.
A concise way of treating the problem based on measure theory is presented in this section. The results cover a general setting that includes the candidate spaces $\Omega \in \{\mathbb{Z}^n, \R^n\}$.

\begin{definition}[$f$-$\varepsilon$-$\delta$-concentrated, $f$-concentrated]
\label{def:con}
Let $(f, d, \Omega, \mathcal{A}, (\Ps{\theta})_{\theta \in \Theta})$ be a relaxation as defined in \cref{def:relProb}. If $\varepsilon, \delta > 0\,$, the measure $\Ps{\theta}$ is called $f$-$\varepsilon$-$\delta$-\textbf{concentrated} at $x \in \Omega$ if
\[
   \int_{ \Omega - U_\delta(x) } \max \{\abs{f}, 1\} \, \mathrm{d}\Ps{\theta} < \varepsilon \,,
\tag*{(CON)}
\label{eq:defRelProbCon}
\]
where $U_\delta(x)$ denotes the $\delta$-ball centered at $x$.
Moreover, probability measures $(\Ps{\theta})_{\theta \in \Theta}$ are called $f$-\textbf{concentrated} at $x \in \Omega$ if 
\[
    \forall \varepsilon, \delta > 0: \exists \theta \in \Theta: \,\, \Ps{\theta} \,\, \text{is $f$-$\varepsilon$-$\delta$ concentrated at $x$}\,.
\]
\end{definition}

A general approximation result will pave the way toward approximation at a global optimum of $f$.

\begin{lemma}[Approximation]
\label{lem:approx}
Let $(f, d, \Omega, \mathcal{A}, (\Ps{\theta})_{\theta \in \Theta})$ be a relaxation as defined in \cref{def:relProb}, where $(\Ps{\theta})_{\theta \in \Theta}$ are $f$-concentrated at $x \in \Omega$ and $f$ is continuous at the same $x \in \Omega$. We have
\[
     \forall \gamma > 0 : \exists \theta \in \Theta:
     \,\, \abs{\E_{\theta}(f)-f(x)} < \gamma \,.
\]
\end{lemma}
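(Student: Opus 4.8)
The plan is to exploit that each $\Ps{\theta}$ is a probability measure, so that $f(x) = \int_\Omega f(x) \, \mathrm{d}\Ps{\theta}$, and hence the error decomposes as $\E_{\theta}(f) - f(x) = \int_\Omega (f - f(x)) \, \mathrm{d}\Ps{\theta}$. I would then bound its absolute value by $\int_\Omega \abs{f - f(x)} \, \mathrm{d}\Ps{\theta}$ and split the domain of integration at the boundary of the $\delta$-ball $U_\delta(x)$ into a near part (inside the ball) and a far part (its complement $\Omega - U_\delta(x)$), controlling each by a distinct hypothesis.

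On the near part, continuity of $f$ at $x$ supplies, for any prescribed tolerance $\eta > 0$, a radius $\delta > 0$ with $\abs{f(y) - f(x)} < \eta$ for all $y \in U_\delta(x)$; since $\Ps{\theta}(U_\delta(x)) \leq 1$, the near contribution is at most $\eta$, uniformly in $\theta$. On the far part, I would apply the triangle inequality $\abs{f - f(x)} \leq \abs{f} + \abs{f(x)}$ together with the defining inequality \ref{eq:defRelProbCon} of $f$-$\varepsilon$-$\delta$-concentration. The crucial observation is that $\max\{\abs{f}, 1\}$ dominates both $\abs{f}$ and the constant $1$, so a single instance of \ref{eq:defRelProbCon} simultaneously yields $\int_{\Omega - U_\delta(x)} \abs{f} \, \mathrm{d}\Ps{\theta} < \varepsilon$ and the tail-mass bound $\Ps{\theta}(\Omega - U_\delta(x)) = \int_{\Omega - U_\delta(x)} 1 \, \mathrm{d}\Ps{\theta} < \varepsilon$. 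The latter governs the constant term via $\int_{\Omega - U_\delta(x)} \abs{f(x)} \, \mathrm{d}\Ps{\theta} = \abs{f(x)} \, \Ps{\theta}(\Omega - U_\delta(x)) < \abs{f(x)} \, \varepsilon$, so the far contribution is at most $\varepsilon(1 + \abs{f(x)})$.

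Combining both parts gives the estimate $\abs{\E_{\theta}(f) - f(x)} \leq \eta + \varepsilon(1 + \abs{f(x)})$. To conclude, given $\gamma > 0$ I would first fix $\eta := \gamma/2$, which through continuity fixes an admissible radius $\delta$, then pick $\varepsilon < \gamma / \big(2(1 + \abs{f(x)})\big)$, and finally invoke $f$-concentration at $x$ for this pair $(\varepsilon, \delta)$ to obtain a $\theta \in \Theta$ for which $\Ps{\theta}$ is $f$-$\varepsilon$-$\delta$-concentrated. For that $\theta$ the two contributions are each below $\gamma/2$, giving $\abs{\E_{\theta}(f) - f(x)} < \gamma$.

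The one point demanding care is the order of the quantifiers: the radius $\delta$ must be selected from continuity \emph{before} $f$-concentration is applied, since the concentration hypothesis lets us prescribe $\delta$ but not conversely. The genuinely load-bearing device is the use of $\max\{\abs{f}, 1\}$ rather than $\abs{f}$ in the definition of concentration: it is precisely what allows a single tail bound to dominate both the probability mass and the integral of $\abs{f}$ on the complement of the ball, which is what keeps the argument valid even when $f$ happens to be small near the tail.
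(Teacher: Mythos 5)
Your proof is correct and follows essentially the same route as the paper's: split the integral at the $\delta$-ball chosen from continuity, control the near part by continuity and the far part by letting the single $\max\{\abs{f},1\}$ bound in \ref{eq:defRelProbCon} dominate both $\int_{\Omega-U_\delta(x)}\abs{f}\,\mathrm{d}\Ps{\theta}$ and the tail mass $\Ps{\theta}(\Omega-U_\delta(x))$, with the quantifiers in the correct order. The only differences are cosmetic (a $\gamma/2+\gamma/2$ budget instead of three thirds, and centering the integrand at $f(x)$ before splitting rather than after); your choice $\varepsilon<\gamma/\big(2(1+\abs{f(x)})\big)$ even sidesteps the paper's implicit division by $\abs{f(x)}$ when $f(x)=0$.
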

\begin{proof}
    Let $\gamma > 0\,$. Due to continuity of $f$ at $x \in \Omega$, we pick $\delta > 0$ such that
    \begin{equation}
    \label{eq:lemApproxCont}
        d(x,y) < \delta 
        \implies
        \abs{f(x) - f(y)} < \tfrac{\gamma}{3} \,.
    \end{equation}
    
    As $(\Ps{\theta})_{\theta \in \Theta}$ are $f$-concentrated at $x$, we pick $\theta^*$ such that
    \begin{equation}
    \label{eq:lemApproxConcent}
        \Ps{\theta^*} \,\, \text{is $f$-$\varepsilon$-$\delta$-concentrated at $x$}\,,
    \end{equation}
    where $\varepsilon := \min \big\{ \tfrac{\gamma}{3}, \tfrac{\gamma}{3\abs{f(x)}} \big\}$.\\

    It follows, that
    \begingroup
    \begin{align*}
        \abs{\E_{\theta^*}(f)-f(x)}
        &=
        \bigg|
            \int_{\Omega - U_\delta(x)} f \, \mathrm{d}\Ps{\theta^*}
            + \int_{U_\delta(x)} f \, \mathrm{d}\Ps{\theta^*}
            - f(x) 
        \bigg|
        \tag*{\small{\text{(additivity}})}
        \\
        &\leq
        \int_{\Omega - U_\delta(x)} \abs{f} \, \mathrm{d}\Ps{\theta^*}
        +
        \bigg|
            \int_{U_\delta(x)} f \, \mathrm{d}\Ps{\theta^*}
            - f(x)
        \bigg|
        \tag*{\small{\text{(triangle inequality}})}
        \\
        &<
        \frac{\gamma}{3}
        +
        \bigg|
            \int_{U_\delta(x)} f \, \mathrm{d}\Ps{\theta^*}
            - f(x)
        \bigg|
        \tag*{\small{\text{(\cref{eq:lemApproxConcent}}})}
        \\
        &=
        \frac{\gamma}{3}
        +
        \bigg|
            \int_{U_\delta(x)} f - f(x)\, \mathrm{d}\Ps{\theta^*}
            +
             f(x)\Ps{\theta^*}\big( \Omega - U_\delta(x) \big)
        \bigg|
        \tag*{\small{\text{(additivity}})}
        \\
        &\leq
        \frac{\gamma}{3}
        +
        \int_{U_\delta(x)} \abs{f - f(x)} \, \mathrm{d}\Ps{\theta^*}
        +
         \abs{f(x)} \Ps{\theta^*}\big( \Omega - U_\delta(x) \big)
        \tag*{\small{\text{(triangle inequality}})}
        \\
        &\leq
        \frac{\gamma}{3}
        +
        \frac{\gamma}{3} \Ps{\theta^*}\big( U_\delta(x) \big)
        +
        \abs{f(x)} \Ps{\theta^*}\big( \Omega - U_\delta(x) \big)
        \tag*{\small{\text{(\cref{eq:lemApproxCont}}})}
        \\
        &<
        \frac{\gamma}{3}
        +
        \frac{\gamma}{3}
        +
        \frac{\gamma}{3}
        = \gamma
        \tag*{\small{\text{(probability measure and \cref{eq:lemApproxConcent}}})}
    \end{align*}
    \endgroup
\end{proof}

\begin{remark}
    In general, we can not relax the requirement of $f$-concentration as specified by \ref{eq:defRelProbCon} and still fulfill the approximation property given in \cref{lem:approx}. Consider the following examples.
    \begin{enumerate}
        \item Let $f: \R \to \R, x\mapsto x^2$, $\delta_x$ denote the Dirac measure at $x \in \R$, and
        \[
            (\Omega, \mathcal{A}, \Ps{\theta}) \overset{!}{=} \big(\R, \mathcal{B}(\R),  (1-\theta) \delta_0 + \theta \delta_{1/\theta}\big) \,, \,\, \forall \theta \in (0,1) \, .
        \]
        Clearly, we have a concentration of measure at $x=0$, as
        \[
            \Ps{\theta}( \Omega - \{ 0 \}) = \theta \,,\,\, \forall \theta \in (0,1)\,.
        \]
        However, we can not approximate $f$ at $0$ as in \cref{lem:approx}, since for all $\theta \in (0,1)$, we have
        \[
            \E_\theta(f) = \tfrac{1}{\theta} > 1 > 0 = f(0) \,.
        \]
        
        \item Let $f: \R \to \R, x \mapsto -\exp\big(-x^2\big)$ and 
        \[
            (\Omega, \mathcal{A}, \Ps{\theta}) \overset{!}{=} (\R, \mathcal{B}(\R), \delta_\theta) \,, \,\, \forall \theta \in (1, \infty) \, .
        \]
        Clearly, the $L^1$-norm of our function outside of $0$ can be arbitrarily small, i.e.,
        \[
            \forall \varepsilon > 0 : \exists \theta \in (1, \infty) : \int_{\Omega - \{ 0 \}} \abs{f} \, \mathrm{d}\delta_\theta \leq \exp\big(-\theta^2\big) < \varepsilon \,.
        \]
        However, we can not approximate $f$ at $0$ as in \cref{lem:approx}, since for all $\theta \in (1, \infty)$, we have
        \[
            \E_\theta(f) = -\exp\big(-\theta^2\big) > - \tfrac{1}{e} > -1 = f(0) \,.
        \]
    \end{enumerate}
\end{remark}

\begin{theorem}[Consistency]
\label{thm:consistent}
    Let $(f, d, \Omega, \mathcal{A}, (\Ps{\theta})_{\theta \in \Theta})$ be a relaxation as defined in \cref{def:relProb} and let $f: \Omega \to \R$ be continuous at its unique global minimum at $x^*$. Furthermore, assume that $(\Ps{\theta})_{\theta \in \Theta}$ are $f$-concentrated at $x^*$. Then, the relaxation is $f$-consistent.
\end{theorem}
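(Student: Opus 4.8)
The plan is to deduce consistency almost immediately from the Approximation Lemma (\cref{lem:approx}), exploiting the fact that a \emph{unique} global minimum leaves a strictly positive gap between $f(x)$ and $f(x^*)$ for every competitor $x \neq x^*$. Concretely, I would fix an arbitrary $x \in \Omega$ with $x \neq x^*$ and set $\gamma := f(x) - f(x^*)$. Uniqueness of the global minimum guarantees $f(x^*) < f(x)$, hence $\gamma > 0$, so $\gamma$ is a legitimate accuracy target for the approximation statement. Note that $\theta^*$ is allowed to depend on $x$ in the definition of $f$-consistency, which is exactly what this choice of $\gamma$ will produce.

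Next I would apply \cref{lem:approx} \emph{at the point $x^*$}: the two hypotheses of that lemma---continuity of $f$ at $x^*$ and $f$-concentration of $(\Ps{\theta})_{\theta \in \Theta}$ at $x^*$---are precisely the assumptions of the present theorem, so for the chosen $\gamma$ there exists $\theta^* \in \Theta$ with $\abs{\E_{\theta^*}(f) - f(x^*)} < \gamma$. Dropping the absolute value on the upper side yields $\E_{\theta^*}(f) < f(x^*) + \gamma = f(x)$. Since $x \neq x^*$ was arbitrary, this is exactly the defining condition of $f$-consistency, and the proof closes.

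I do not expect a genuine obstacle here, as the analytical work is entirely front-loaded into \cref{lem:approx}; the theorem is essentially a bookkeeping step that selects the right tolerance. The one point to handle with care is the role of \emph{uniqueness}: it is what makes $\gamma$ strictly positive and thus permits invoking the lemma with a meaningful accuracy target. It is worth emphasizing that no continuity or concentration hypothesis at $x$ itself is required---everything is anchored at $x^*$---and that the argument never computes $\E_{\theta^*}(f)$ explicitly, only bounds it from above by the approximation error around the optimum.
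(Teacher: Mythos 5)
Your proposal is correct and follows essentially the same route as the paper: fix $x \neq x^*$, set $\gamma := f(x) - f(x^*) > 0$ by uniqueness, invoke \cref{lem:approx} at $x^*$ to obtain $\theta^*$ with $\abs{\E_{\theta^*}(f) - f(x^*)} < \gamma$, and conclude $\E_{\theta^*}(f) < f(x)$. Your step of simply dropping the absolute value on the upper side is in fact slightly cleaner than the paper's chain of inequalities, which makes the same point less directly.
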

\begin{proof}
    Pick $x \in \Omega$ such that $x \neq x^*$. Define
    \begin{equation}
        \gamma := f(x) - f(x^*) \,,
    \label{eq:thmPresOpt}
    \end{equation}
    which is strictly positive as $x^*$ is the unique global minimum of $f$.
    
    By assumption, $(\Ps{\theta})_{\theta \in \Theta}$ are $f$-concentrated at $x^*$. Thus, by \cref{lem:approx} we find $\theta^* \in \Omega$ such that
    \begin{equation}
        \abs{ \E_{\theta^*}(f) - f(x^*) } < \gamma \,.
    \label{eq:thmPresConc}
    \end{equation}
    Applying \cref{eq:thmPresOpt} in the first step, \cref{eq:thmPresConc} in the second step and the fact that $x^*$ is a global minimum of $f$ in the third, we obtain the result
    \begin{align*}
        f(x)
        &=
        \gamma + f(x^*)
        \\
        &>
        \abs{ \E_{\theta^*}(f) - f(x^*) } + f(x^*)
        \\
        &=
        \E_{\theta^*}(f) - f(x^*) + f(x^*)
        \\
        &=
        \E_{\theta^*}(f) \,.
    \end{align*}
\end{proof}

A consistent relaxation ensures that minimizing the relaxation results in function values arbitrarily close to those of the $f$ at its optimum $x^* \in \Omega$. Therefore, we find $\Ps{\theta}$ with an arbitrarily close-to-one probability of an arbitrarily low regret. In the limiting case, we can also observe the following immediate result.

\begin{corollary}
    In the setting of \cref{thm:consistent}, if the relaxation has a minimum, it is unique at the Dirac measure of the minimum of $f$, that is, at $\delta_{x^*}$.
\end{corollary}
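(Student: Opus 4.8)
The plan is to show that the only measure in the family that can attain the minimum of the relaxation is $\delta_{x^*}$, by squeezing the optimal value between a universal lower bound and the value $f(x^*)$, and then characterizing exactly when that lower bound is attained.

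First I would establish a universal lower bound for the relaxation. Since $x^*$ is the global minimum of $f$, we have $f \geq f(x^*)$ pointwise on $\Omega$, and integrating this inequality against any probability measure $\Ps{\theta}$ gives $\E_\theta(f) \geq f(x^*)$ for all $\theta \in \Theta$. On the other hand, the approximation result (\cref{lem:approx}), which applies because $f$ is continuous at $x^*$ and the family is $f$-concentrated there, produces parameters whose expectation lies arbitrarily close to $f(x^*)$, so $\inf_{\theta \in \Theta} \E_\theta(f) = f(x^*)$. Consequently, if a minimizer $\theta_0$ exists, it must satisfy $\E_{\theta_0}(f) = f(x^*)$.

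Next I would translate the equality $\E_{\theta_0}(f) = f(x^*)$ into a statement about the measure $\Ps{\theta_0}$ itself. Setting $g := f - f(x^*) \geq 0$, the equality reads $\int_\Omega g \, \mathrm{d}\Ps{\theta_0} = 0$. A Markov-type argument then forces $g$ to vanish almost everywhere: for each $n$ one has $\Ps{\theta_0}(\{ g \geq 1/n \}) \leq n \int_\Omega g \, \mathrm{d}\Ps{\theta_0} = 0$, whence $\Ps{\theta_0}(\{ g > 0 \}) = 0$. Because $x^*$ is the \emph{unique} global minimum, $\{ g = 0 \} = \{ x^* \}$, which is Borel measurable as a singleton of the metric space $(\Omega, d)$ since $\mathcal{A} = \mathcal{B}(d)$. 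Therefore $\Ps{\theta_0}(\{ x^* \}) = 1$, i.e.\ $\Ps{\theta_0} = \delta_{x^*}$.

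Finally, uniqueness is immediate: applying the identical argument to any further minimizer shows that every parameter attaining the minimal value $f(x^*)$ induces the very same measure $\delta_{x^*}$, so the minimizing measure is unique even if several parameters happen to map to it. The step I expect to require the most care is the passage from the vanishing nonnegative integral to the almost-everywhere conclusion, together with confirming measurability of $\{ x^* \}$ and that uniqueness of the global minimum collapses the level set $\{ f = f(x^*) \}$ to the single point $x^*$; once these measure-theoretic points are in place the result follows directly.
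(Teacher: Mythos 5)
Your proof is correct and is essentially the argument the paper intends: the corollary is stated without proof as an immediate consequence of \cref{lem:approx}, and your chain (the pointwise bound $\E_\theta(f) \ge f(x^*)$, the approximation lemma forcing any minimizer $\theta_0$ to satisfy $\E_{\theta_0}(f) = f(x^*)$, and the Markov-inequality step showing the nonnegative function $f - f(x^*)$ vanishes $\Ps{\theta_0}$-almost everywhere) supplies exactly the details needed to conclude $\Ps{\theta_0} = \delta_{x^*}$. The points you flag as delicate are handled correctly: singletons are Borel in the metric space $(\Omega,d)$ with $\mathcal{A}=\mathcal{B}(d)$, and uniqueness of the global minimum collapses the level set $\{f = f(x^*)\}$ to $\{x^*\}$.
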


\section{Imposing Differentiability}
\label{sec:Imposing Differentiability}

In this section, we derive gradient representations of relaxations. The results in \cref{theorem:diff,cor:diff} cover cases where the objective function transformed by the relaxation may not be differentiable or no differentiable structure of the candidate space $\Omega$ is specified.\\

First, however, let us look at the simple case where the candidate space $\Omega=\Theta=\R^n$, the objective function $f: \Omega \to \R$ is continuously differentiable, and there is a distinguished probability measure $\Ps{}$ on $(\Omega, \mathcal{B}(\Omega))$. We can construct a family of probability measures by setting
\[
    \Ps{\theta}(A) := \Ps{}(A-\theta)
\]
for all $\theta \in \Theta$ and for all $A \in \mathcal{A} := \mathcal{B}(\Omega)$.
This case is critical in practice and particularly interesting due to the following property.
\begin{theorem}[Preserving Differentiability]
    Let a relaxation as defined in \cref{def:relProb} be given by $(f, \R^n, \mathcal{B}(\R^n), (\Ps{\theta})_{\theta \in \R^n})$ with
    \begin{itemize}
    \item $\Ps{\theta}(A) = \Ps{0}(A-\theta)$ for all $\theta \in \R^n$ and for all $A \in \mathcal{B}(\R^n)$
    
    \item $f: \R^n \to \R$ is continuously differentiable, and
    
    \item there exists $\varepsilon > 0$ such that $x \in \R^n \mapsto \max \abs{(\partial_r f)(U_\varepsilon(x))}$ has a $\Ps{0}$-integrable upper bound for all $r \in \N_{\leq n}$.
    \end{itemize}
    We have $\nabla_\theta \E_\cdot(f) \equiv \E_\cdot(\nabla_x f)$. If $\nabla_x f$ is Lipschitz/uniformly continuous, then $\nabla_\theta \E_\cdot(f)$ is as well.
\end{theorem}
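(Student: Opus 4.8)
The plan is to move all of the $\theta$-dependence into the argument of $f$ via the translation structure of the family, and then to differentiate under the integral sign. First I would apply the change-of-variables (pushforward) formula: since $\Ps{\theta}(A) = \Ps{0}(A - \theta)$, the measure $\Ps{\theta}$ is the image of $\Ps{0}$ under $x \mapsto x + \theta$, whence
\[
    \E_\theta(f) = \int_{\R^n} f \, \mathrm{d}\Ps{\theta} = \int_{\R^n} f(x + \theta) \, \Ps{0}(\mathrm{d}x) \,.
\]
In this form the only $\theta$-dependence sits inside $f$, so the $C^1$-hypothesis on $f$ can be exploited directly.

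Next I would establish each partial derivative by the Leibniz rule in its dominated-convergence form. Fix a base point $\theta_0$ and a coordinate $r$. For $x$ fixed the map $\theta \mapsto f(x+\theta)$ is differentiable with $\partial_{\theta_r} f(x+\theta) = (\partial_r f)(x+\theta)$ because $f$ is continuously differentiable. To justify the interchange I localize $\theta$ to the ball $U_\varepsilon(\theta_0)$ supplied by the hypothesis: for such $\theta$ the point $x+\theta$ lies in $U_\varepsilon(x+\theta_0)$, so
\[
    \abs{(\partial_r f)(x+\theta)} \le \max \abs{(\partial_r f)(U_\varepsilon(x + \theta_0))} \,,
\]
and the right-hand side, viewed as a function of $x$, is to be controlled by a $\Ps{0}$-integrable majorant. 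The Leibniz rule then gives $\partial_{\theta_r} \E_\theta(f) = \int_{\R^n} (\partial_r f)(x+\theta)\,\Ps{0}(\mathrm{d}x) = \E_\theta(\partial_r f)$, and undoing the change of variables yields the componentwise identity $\nabla_\theta \E_\cdot(f) \equiv \E_\cdot(\nabla_x f)$. Continuity of each partial in $\theta$, which promotes mere existence of partials to genuine differentiability through the $C^1$-criterion, follows from a second dominated-convergence argument using the same majorant together with continuity of $\partial_r f$.

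For the transfer of regularity I would argue directly from the representation, using that $\Ps{0}$ is a probability measure. For any $\theta_1, \theta_2 \in \R^n$ the triangle inequality for integrals gives
\[
    \norm{\E_{\theta_1}(\nabla_x f) - \E_{\theta_2}(\nabla_x f)}
    \le \int_{\R^n} \norm{\nabla f(x+\theta_1) - \nabla f(x+\theta_2)} \, \Ps{0}(\mathrm{d}x) \,.
\]
If $\nabla_x f$ is $L$-Lipschitz, the integrand is bounded by $L\norm{\theta_1 - \theta_2}$ uniformly in $x$, and integrating this constant against $\Ps{0}$ returns $L\norm{\theta_1 - \theta_2}$, so $\nabla_\theta \E_\cdot(f)$ is $L$-Lipschitz with the \emph{same} constant. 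If instead $\nabla_x f$ is only uniformly continuous, then for $\gamma > 0$ a modulus $\delta$ satisfying $\norm{u-v} < \delta \implies \norm{\nabla f(u) - \nabla f(v)} < \gamma$ applies to $u = x+\theta_1$, $v = x+\theta_2$ for \emph{every} $x$ as soon as $\norm{\theta_1 - \theta_2} < \delta$, yielding the same bound after integration; hence uniform continuity transfers with the same modulus.

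The main obstacle is securing the dominating function in the differentiation step uniformly over a neighborhood of each base point. The integrability assumption is phrased for $x \mapsto \max\abs{(\partial_r f)(U_\varepsilon(x))}$ against $\Ps{0}$, whereas the localization above produces this majorant evaluated at the shifted argument $x + \theta_0$, i.e. it requires $\Ps{0}$-integrability of $x \mapsto \max\abs{(\partial_r f)(U_\varepsilon(x+\theta_0))}$ at a general $\theta_0 \neq 0$. This is immediate at $\theta_0 = 0$; for arbitrary $\theta_0$ the shifted majorant is precisely the $\varepsilon$-neighborhood bound associated with $\Ps{\theta_0}$, so the argument closes once the integrability hypothesis is read relative to the whole translated family $(\Ps{\theta})_{\theta \in \R^n}$ (consistent with the standing assumption $f \in \mathcal{L}^1(\Omega, \mathcal{A}, \Ps{\theta})$ for all $\theta$). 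By contrast, the regularity transfer is elementary, resting only on the integral triangle inequality and the normalization $\Ps{0}(\R^n) = 1$.
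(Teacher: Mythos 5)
Your proposal is correct and follows essentially the same route as the paper: rewrite $\E_\theta(f)$ as $\int_{\R^n} f(x+\theta)\,\Ps{0}(\mathrm{d}x)$ via the translation structure, differentiate under the integral using the $\varepsilon$-neighborhood majorant to justify Leibniz's rule, and transfer Lipschitz/uniform continuity by the integral triangle inequality against the probability measure $\Ps{0}$. The subtlety you flag about needing the majorant at a shifted base point $\theta_0 \neq 0$ is precisely what the paper's accompanying remark addresses (the domination is only required locally around each parameter), and your reading of the integrability hypothesis relative to the translated family is the intended one.
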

\begin{remark}
    The third assumption deviates from the standard assumption of Leibniz's integral rule, as we have to restrict the parameter set to some neighborhood around any parameter to obtain a bound that fits Leibniz's integral rule. As differentiability is a local property, this still suffices for the statement of the theorem.
\end{remark}
\begin{proof}
Let $\psi(\theta, x) := \psi_\theta(x) := x+\theta$ for all $x \in \R^n$ and for all $\theta \in \R^n$, then
    \begin{align*}
    \int_\Omega f \, \mathrm{d}\Ps{\theta}
    &=
    \int_0^\infty \Ps{\theta}(\{ x \in \R^n \mid f(x) > t \}) \, \mathrm{d}t
    \tag*{\small{\text{(definition Lebesgue integral)}}}
    \\
    &=
    \int_0^\infty \Ps{0}\big(\psi_\theta^{-1}(\{ x \in \R^n \mid f(x) > t \})\big) \, \mathrm{d}t
    \tag*{\small{\text{(assumption)}}}
    \\
    &=
    \int_0^\infty \Ps{0}(\{ x \in \R^n \mid (f\circ \psi_\theta)(x) > t \}) \, \mathrm{d}t
    \tag*{\small{\text{(definition inverse)}}}
    \\
    &=
    \int_\Omega f\circ\psi_\theta \, \mathrm{d}\Ps{0} \,.
    \tag*{\small{\text{(definition Lebesgue integral)}}}
    \end{align*}
    Due to the last assumption, we can apply Leibniz's integral rule \cite[p.~91,~Theorem~11.5]{schilling2005}, and get
    \begin{align*}
        \nabla_\theta \int_\Omega f \circ \psi_\theta \, \mathrm{d}\Ps{0}
        &=
        \int_\Omega (\nabla_\theta f \circ \psi)(\theta, x) \, \Ps{0}(\mathrm{d} x)
    \tag*{\small{\text{(Leibniz integral rule)}}}
        \\
        &=
        \int_\Omega (\nabla_x f)(x+\theta) \, \Ps{0}(\mathrm{d} x)
    \tag*{\small{\text{(definition $\psi$)}}}
        \\
        &=
        \int_\Omega (\nabla_x f) \, \mathrm{d}\Ps{\theta} \,.
    \tag*{\small{\text{(above equation)}}}
    \end{align*}
    Combining the results, we obtain for all $\theta \in \R^n$, that
    \[
        \nabla_\theta \E_\theta(f) = \E_\theta(\nabla_x f)\,.
    \]
    In case the gradient of $f$ is also Lipschitz with constant $L$, we have
    \begin{align*}
        \norm{\nabla_\theta \E_{\theta'}(f) -  \nabla_\theta \E_\theta(f)}
        &\leq
        \int_\Omega \norm{(\nabla_x f)(x+\theta')-(\nabla_x f)(x+\theta)} \, \Ps{0}(\mathrm{d} x)
    \tag*{\small{\text{(above derivation)}}}
        \\
        &\leq
        \int_\Omega L \norm{\theta'-\theta} \, \Ps{0}(\mathrm{d} x)
    \tag*{\small{\text{(Lipschitzness of $f$)}}}
        \\
        &= L \norm{\theta'-\theta} \,.
        \tag*{\small{\text{(Lebesgue integral \& probability measure)}}}
    \end{align*}
    By an analogous derivation, we obtain continuity of $\nabla_\theta \E_\cdot(f)$ if $\nabla_x f$ is continuous.
\end{proof}

We will now look again at the abstract, general setting for relaxations. A transfer of smoothness from the probability measures $\{ \Ps{\theta} \mid \theta \in \Theta \}$ to $\E_\cdot(f)$ is also central to applications. Tying together many notions of differentiability, e.g., for parameter-free distributions, we consider pullbacks using $\gamma: U \subseteq \R^n \to \Theta$ to describe the transfer of (Lipschitz) differentiablity.

\begin{theorem}
\label{theorem:diff}
    Let $(f, \Omega, \mathcal{A}, (\Ps{\theta})_{\theta \in \Theta})$ be a relaxation as defined in \cref{def:relProb} and let $\gamma: U \to \Theta$ be a function for some open $U \subseteq \R^n$.
    
    \setlist[description]{font=\normalfont\itshape}
    \begin{description}
    \item[(Partial Differentiability)] If
    \begin{itemize}
        \item there exists a measure $\mu$ on $(\Omega, \mathcal{A})$ s.t. for all $t \in U$ there exists a density $k_{\gamma(t)}$ of $\Ps{\gamma(t)}$ w.r.t. $\mu$,
        \item for all $x \in \Omega$ the partial derivatives of $t \in U \mapsto k_{\gamma(t)}(x)$ exist, and
        \item for all $r \in \N_{\leq n}$ there exists $g_r \in \mathcal{L}^1(\Omega, \mathcal{A}, \mu)$ s.t. for all $(t, x) \in U \times \Omega$ we have 
        \[
            \abs{f(x) \partial_r k_{\gamma(t)}(x)} \leq g_r(x) \,,\footnote{The operator $\partial_r$ refers to the partial derivative with respect to the $r$-th argument of the function $\gamma$.}
        \]
    \end{itemize}
    then the partial derivatives of $t \in U \mapsto \E_{\gamma(t)}(f)$ exist and
    \[
        \partial_r \E_{\gamma(t)}(f) = \int_\Omega f \partial_r k_{\gamma(t)} \, \mathrm{d}\mu \,.
    \]
    
    \item[(Continuous Differentiability)] If in addtion, for all $r \in \N_{\leq n}$ and for all $x \in \Omega$, $t \in U \mapsto \partial_r k_{\gamma(t)}(x)$ is continuous, then $t \in U \mapsto \E_{\gamma(t)}(f)$ is continuously differentiable.
    
    \item[(Lipschitz Differentiability)] If in addition, for all $r \in \N_{\leq n}$ and for all $x \in \Omega$, there exists $L_r(x) \in \R$ s.t. for all $t',t \in U$
    \begin{align*}
        \abs{\partial_r k_{\gamma(t')}(x) - \partial_r k_{\gamma(t)}(x)}
        \leq L_r(x) \norm{t' - t}
        \quad \text{and} \quad f L_r \in \mathcal{L}^1(\Omega, \mathcal{A}, \mu) \,,
    \end{align*}
    then $t \in U \mapsto \nabla \E_{\gamma(t)}(f)$ is Lipschitz with constant
    \[
        \sum_{r=1}^n \int_\Omega \abs{f} L_r \, \mathrm{d}\mu \, .
    \]
\end{description}
\end{theorem}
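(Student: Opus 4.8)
The plan is to reduce all three statements to differentiating a single $\mu$-integral in the parameter $t$. Since $k_{\gamma(t)}$ is a density of $\Ps{\gamma(t)}$ with respect to $\mu$ and $f \in \mathcal{L}^1(\Omega, \mathcal{A}, \Ps{\gamma(t)})$ by the definition of a relaxation, we have $\E_{\gamma(t)}(f) = \int_\Omega f k_{\gamma(t)} \, \mathrm{d}\mu$ for every $t \in U$. For the partial differentiability part I would then apply the Leibniz integral rule \cite[p.~91,~Theorem~11.5]{schilling2005} coordinate-wise: fixing $r \in \N_{\leq n}$ and $t \in U$, I vary only the coordinate $t_r$ in a small interval (possible as $U$ is open) while holding the remaining coordinates fixed. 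Because $f(x)$ does not depend on $t$, the $r$-th partial derivative of the integrand $x \mapsto f(x) k_{\gamma(t)}(x)$ equals $f(x) \partial_r k_{\gamma(t)}(x)$, and the hypothesis $\abs{f(x) \partial_r k_{\gamma(t)}(x)} \leq g_r(x)$ with $g_r \in \mathcal{L}^1(\Omega, \mathcal{A}, \mu)$ supplies exactly the integrable dominating function the rule requires. This yields existence of $\partial_r \E_{\gamma(t)}(f)$ together with the claimed formula.

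For continuous differentiability I would establish continuity of each $t \mapsto \partial_r \E_{\gamma(t)}(f)$ by dominated convergence: along any sequence $t_m \to t$ in $U$, the added assumption makes $f(x) \partial_r k_{\gamma(t_m)}(x) \to f(x) \partial_r k_{\gamma(t)}(x)$ pointwise in $x$, while the same bound $g_r$ dominates uniformly in $m$, so the integrals converge. Having all $n$ partial derivatives continuous on the open set $U$ then upgrades the conclusion from bare partial differentiability to genuine ($C^1$) total differentiability by the standard multivariable-calculus criterion.

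For the Lipschitz part, observe first that the Lipschitz hypothesis on $t \mapsto \partial_r k_{\gamma(t)}(x)$ entails its continuity, so the previous part applies and $\nabla \E_{\gamma(\cdot)}(f)$ exists and is continuous. For $t', t \in U$ I would then bound each coordinate by pulling the difference inside the integral and using the Lipschitz estimate, $\abs{\partial_r \E_{\gamma(t')}(f) - \partial_r \E_{\gamma(t)}(f)} \leq \int_\Omega \abs{f}\,\abs{\partial_r k_{\gamma(t')} - \partial_r k_{\gamma(t)}} \, \mathrm{d}\mu \leq \norm{t'-t} \int_\Omega \abs{f} L_r \, \mathrm{d}\mu$, where finiteness is secured by $f L_r \in \mathcal{L}^1(\Omega, \mathcal{A}, \mu)$. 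Collecting the coordinates with the elementary inequality $\norm{v} \leq \sum_{r=1}^n \abs{v_r}$ gives $\norm{\nabla \E_{\gamma(t')}(f) - \nabla \E_{\gamma(t)}(f)} \leq \big( \sum_{r=1}^n \int_\Omega \abs{f} L_r \, \mathrm{d}\mu \big)\norm{t'-t}$, which is precisely the asserted Lipschitz constant.

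The main obstacle I anticipate is careful bookkeeping rather than any deep difficulty. In particular, the Leibniz rule as cited is one-dimensional, so the reduction to a coordinate-wise application inside the open set $U$ must be justified, the per-coordinate dominating functions $g_r$ must be checked to fit that one-dimensional formulation, and the density reduction $\E_{\gamma(t)}(f) = \int_\Omega f k_{\gamma(t)} \, \mathrm{d}\mu$ must be confirmed for every $t \in U$. Once this groundwork is laid, the analytic steps --- dominated convergence for continuity and a direct triangle-inequality estimate for Lipschitzness --- are routine.
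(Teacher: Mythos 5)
Your proposal is correct and follows essentially the same route as the paper's own proof: reduce $\E_{\gamma(t)}(f)$ to $\int_\Omega f k_{\gamma(t)}\,\mathrm{d}\mu$, apply Leibniz's integral rule coordinate-wise on the open set $U$ with the dominating functions $g_r$, obtain continuity of the partials via dominated convergence along sequences, and derive the Lipschitz bound by pulling the difference inside the integral and summing coordinates via $\norm{v}\leq\sum_{r=1}^n\abs{v_r}$. No substantive differences.
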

\begin{samepage}
\begin{proof}\leavevmode
\setlist[description]{font=\normalfont}
\begin{description}
    \item[(Partial Differentiability)] To obtain the first result, we apply Leibniz's integral rule \cite[p.~91,~Theorem~11.5]{schilling2005}. The requirements are fulfilled as per the following arguments.
    \begin{enumerate}[label=(\alph*)]
        \item For all $t \in U$, we have
        \vspace{-3mm}
        \[
            x \in \Omega \mapsto f(x)k_{\gamma(t)}(x) \in \mathcal{L}^1(\Omega, \mathcal{A}, \mu)
        \]
        as $f \in \mathcal{L}^1(\Omega, \mathcal{A}, \Ps{\gamma(t)})$ and $k_{\gamma(t)}\mu = \Ps{\gamma(t)}$ $\mu$-almost everywhere.
        
        \item For all $x \in \Omega$, the function $t \in U \mapsto f(x)k_{\gamma(t)}(x)$ is partially differentiable as $t \in U \mapsto k_{\gamma(t)}(x)$ is partially differentiable.
        
        \item In each coordinate this requirement is directly fulfilled by our initial assumptions.
    \end{enumerate}
    As $U$ is open in $\R^d$, we can apply Leibniz's integral rule to each coordinate direction.
    
    \item[(Continuous Differentiability)] To obtain the second result, we apply the Lebesgue dominated convergence theorem, e.g., see \cite[p.~89]{schilling2005}, to an arbitrary convergent sequence $(t_j)_{j\in \N} \to t \in U$. Clearly, for all $r \in \N_{\leq n}$ and for all $x \in \Omega$
    \vspace{-1mm}
    \begin{align*}
        \partial_r k_{\gamma(t_j)}(x) \xrightarrow{j \to \infty} \partial_r k_{\gamma(t)}(x) \quad
        \implies \quad
        f(x)\partial_r k_{\gamma(t_j)}(x) \xrightarrow{j \to \infty} f(x)\partial_r k_{\gamma(t)}(x) & \,.
    \end{align*}
    The left side of the above statement is fulfilled by our additional continuity assumption. The boundedness requirement is fulfilled by the assumption of the first statement. Therefore, the Lebesgue dominated convergence theorem tells us for all $r \in \N_{\leq n}$
    \vspace{-1mm}
    \[
        \lim_{j \to \infty} \int_\Omega f \partial_r k_{\gamma(t_j)} \, \mathrm{d}\mu = \int_\Omega f \partial_r k_{\gamma(t)} \, \mathrm{d}\mu \,.
    \]
    Combined with the first result we get that for all $r \in \N_{\leq n}$
    \[
         \lim_{j \to \infty} \partial_r \E_{\gamma(t_j)}(f) = \partial_r \E_{\gamma(t)}(f) \,,
    \]
    i.e., continuity of the partial differentials. This implies continuity of the differential.
    
    \item[(Lipschitz Differentiability)] To obtain the third result, we observe that for all $r \in \N_{\leq n}$, for all $x \in \Omega$, and for all $t',t \in U$
    \vspace{-2mm}
    \begin{align*}
        &&
        \abs{\partial_r k_{\gamma(t')}(x) - \partial_r k_{\gamma(t)}(x)}
        \leq L_r(x) \norm{t' - t} &
        \\
        \implies &&
        \abs{f(x)\partial_r k_{\gamma(t')}(x) - f(x)\partial_r k_{\gamma(t)}(x)}
        \leq \abs{f(x)}L_r(x) \norm{t' - t} & \,.
    \end{align*}
    The left side of the above statement is fulfilled by our additional Lipschitz assumption. Using the result of the first statement and the triangle inequality this implies that for all $t', t \in U$
    \begin{align*}
        \norm{ \nabla\E_{\gamma(t')}(f) - \nabla\E_{\gamma(t)}(f) } 
        &=
        \norm{\int_\Omega  f \nabla k_{\gamma(t')} - f \nabla k_{\gamma(t)} \, \mathrm{d}\mu}
        \\
        &\leq
        \int_\Omega \norm{f\nabla k_{\gamma(t')} -  f\nabla k_{\gamma(t)}} \, \mathrm{d}\mu
        \\
        &\leq
        \int_\Omega \sum_{r=1}^n \abs{f \partial_r k_{\gamma(t')} - f \partial_r k_{\gamma(t)}} \, \mathrm{d}\mu
        \\
        &\leq
        \int_\Omega \sum_{r=1}^n \abs{f} L_r \norm{t' - t} \, \mathrm{d}\mu
        \\
        &=
        \bigg(\sum_{r=1}^n \int_\Omega \abs{f} L_r \, \mathrm{d}\mu \bigg)\norm{t' - t} \,.
    \end{align*}
\end{description}
\end{proof}
\end{samepage}
\begin{remark}
    In \cref{theorem:diff}, we may relax the requirements to hold for all $x \in \Omega$ $\mu$-almost everywhere only.
\end{remark}

From \cref{theorem:diff}, we can derive the following well-known result, which is useful for gradient-based optimization methods.

\begin{corollary}
\label{cor:diff}
    Let $(f, \Omega, \mathcal{A}, (\Ps{\theta})_{\theta \in \Theta})$ be a relaxation as defined in \cref{def:relProb}, with the properties
    \begin{itemize}
        \item $\Theta$ is an open subset of $\R^n$,
        \item the measures have $\theta$-Lipschitz differentiable densities with respect some measure $\mu$, i.e. $\Ps{\theta} = k_\theta \mu$ for all $\theta \in \Theta$, and
        \item the regularity assumptions of \cref{theorem:diff} with $\gamma \equiv \mathrm{id}_{\Theta}$ are fulfilled.\footnote{$\mathrm{id}_{\Theta}$ denotes the identity map of $\Theta$.}
    \end{itemize}
    Then, $\nabla_\theta \E_\cdot(f) \equiv  \E_\mu(f \nabla_\theta k_\cdot) \equiv \E_\cdot(f \nabla_\theta \ln k_\cdot)$ is Lipschitz.
\end{corollary}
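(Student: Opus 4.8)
The plan is to obtain the two identities separately and then read off Lipschitzness directly from \cref{theorem:diff}. The first identity $\nabla_\theta \E_\cdot(f) \equiv \E_\mu(f \nabla_\theta k_\cdot)$ is essentially immediate: specializing \cref{theorem:diff} to $\gamma \equiv \mathrm{id}_\Theta$ (which is legitimate since $\Theta$ is open in $\R^n$), the Partial Differentiability clause yields $\partial_r \E_\theta(f) = \int_\Omega f \partial_r k_\theta \, \mathrm{d}\mu$ for each coordinate $r \in \N_{\leq n}$. Assembling these $n$ partial derivatives into a gradient gives $\nabla_\theta \E_\theta(f) = \int_\Omega f \nabla_\theta k_\theta \, \mathrm{d}\mu = \E_\mu(f \nabla_\theta k_\theta)$.

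For the Lipschitz claim I would invoke the Lipschitz Differentiability clause of the same theorem under $\gamma \equiv \mathrm{id}_\Theta$. The hypothesis that the densities are $\theta$-Lipschitz differentiable supplies, for each $r$ and each $x$, a constant $L_r(x)$ bounding $\abs{\partial_r k_{\theta'}(x) - \partial_r k_\theta(x)}$ by $L_r(x)\norm{\theta' - \theta}$, and the integrability $f L_r \in \mathcal{L}^1(\Omega,\mathcal{A},\mu)$ is part of the regularity package assumed via \cref{theorem:diff}. The theorem then states outright that $\theta \mapsto \nabla_\theta \E_\theta(f)$ is Lipschitz with constant $\sum_{r=1}^n \int_\Omega \abs{f} L_r \, \mathrm{d}\mu$, so nothing further is needed here.

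The remaining, and only genuinely delicate, step is the log-derivative identity $\E_\mu(f \nabla_\theta k_\cdot) \equiv \E_\cdot(f \nabla_\theta \ln k_\cdot)$. Formally this is the chain rule $\nabla_\theta \ln k_\theta = \nabla_\theta k_\theta / k_\theta$, but it must be handled on the set $\{k_\theta = 0\}$, where $\ln k_\theta$ is undefined. The key observation is that for every fixed $x$ with $k_\theta(x) = 0$, the nonnegative function $\theta' \mapsto k_{\theta'}(x)$ attains a minimum at the interior point $\theta$ (again using that $\Theta$ is open), and since its partial derivatives are assumed to exist at every $x$ by \cref{theorem:diff}, they must vanish there; hence $\nabla_\theta k_\theta(x) = 0$ on all of $\{k_\theta = 0\}$. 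Consequently the integrand $f \nabla_\theta k_\theta$ is supported on $\{k_\theta > 0\}$, where I may write $f \nabla_\theta k_\theta = f \tfrac{\nabla_\theta k_\theta}{k_\theta} k_\theta = (f \nabla_\theta \ln k_\theta)\, k_\theta$. Integrating against $\mu$, using $\Ps{\theta} = k_\theta \mu$ and $\Ps{\theta}(\{k_\theta = 0\}) = \int_{\{k_\theta=0\}} k_\theta \, \mathrm{d}\mu = 0$ (so that the $\Ps{\theta}$-integral never sees the undefined values of $\ln k_\theta$), yields $\E_\mu(f \nabla_\theta k_\theta) = \E_\theta(f \nabla_\theta \ln k_\theta)$.

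I expect the zero-set of the density to be the main obstacle: one must argue that $\nabla_\theta k_\theta$ actually \emph{vanishes} there, rather than merely that the set is $\Ps{\theta}$-null, since otherwise the passage through $\nabla_\theta \ln k_\theta$ is not justified. The openness of $\Theta$ together with the pointwise differentiability of $\theta \mapsto k_\theta(x)$ for \emph{all} $x \in \Omega$ built into \cref{theorem:diff} are precisely what make this vanishing argument go through, after which the three-way equivalence and Lipschitzness follow by combining the steps above.
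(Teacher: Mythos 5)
Your proposal is correct and follows essentially the same route as the paper: both obtain the gradient identity and the Lipschitz constant by specializing \cref{theorem:diff} to $\gamma \equiv \mathrm{id}_{\Theta}$ and then pass to $\nabla_\theta \ln k_\cdot$ via the log-likelihood trick on the support of the density. Your interior-minimum argument showing $\nabla_\theta k_\theta(x) = 0$ wherever $k_\theta(x) = 0$ is in fact a more careful justification of the step the paper handles by simply restricting the integral to $\mathrm{supp}\, k_\cdot$ and adopting the convention $\ln 0 := 0$ on null sets.
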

\begin{proof}
This result is an application of the "log-likelihood trick".
    \begin{align*}
        \nabla_\theta \E_\cdot(f)
        &\equiv
        \int_{\Omega} f(x) \nabla_\theta k_\cdot(x) \, \mu(\mathrm{d}x)
    \tag*{\small{\text{(by \cref{theorem:diff})}}}
        \\
        &\equiv
        \int_{\mathrm{supp}\, k_\cdot} f(x) (\nabla_\theta k_\cdot)(x) \tfrac{k_\cdot(x)}{k_\cdot(x)} \, \mu(\mathrm{d}x)
    \tag*{\small{\text{($\tfrac{k_\cdot(x)}{k_\cdot(x)} \equiv 1$)}}}
        \\
        &\equiv
        \int_{\mathrm{supp}\, k_\cdot} f(x) (\nabla_\theta \ln k_\cdot)(x) k_\cdot(x) \, \mu(\mathrm{d}x)
    \tag*{\small{\text{(log-likelihood trick)}}}
        \\
        &\equiv
        \int_{\mathrm{supp}\, k_\cdot} f(x) (\nabla_\theta \ln k_\cdot)(x) \, \Ps{\cdot}(\mathrm{d}x)
    \tag*{\small{\text{(by definition $k_\cdot \mu \equiv \Ps{\cdot}$)}}}
        \\
        &\equiv
        \E_\cdot(f \nabla_\theta \ln k_\cdot)
    \tag*{\small{\text{(definition expected value, $\ln 0 := 0$ on null sets)}}}
    \end{align*}
\end{proof}

Especially in black-box, noisy, or unstructured problems $f$ the optimization of $\E_\cdot(f)$ using gradient-based methods is a reasonable alternative to finite-difference methods. The transfer of Lipschitzness of gradients can be exploited for improved convergence guarantees \cite{bubeck2015convex}. In the literature, the properties derived in this section have been observed experimentally for many algorithms employing stochastic relaxations at their core, specifically in the setting of \cref{ex:Rastrigin}.

\section{Weak Convexity}
\label{sec:Weak Convexity}

The notion of convexity of a function is central to continuous optimization. Non-convex optimization problems are much harder to solve in general. However, for families of translated probability measures---including multivariate normal distributions---we show that a function's convexity is preserved in its relaxation. More importantly, we provide conditions under which non-convex functions result in convex stochastic relaxations. To this end, consider the following class of functions $f$ in case of $\Omega=\Theta=\R^n$.

\begin{definition}[Weak Convexity]
\label{def:Weak Convexity}
    Let a relaxation as defined in \cref{def:relProb} be given by $(f, \R^n, \mathcal{B}(\R^n), (\Ps{\theta})_{\theta \in \R^n})$ with
    \[
        \Ps{\theta}(A) = \Ps{0}(A-\theta) \,\, \text{for all $\theta \in \R^n$ and for all $A \in \mathcal{B}(\R^n)$.}
    \]
    We call $f$ (strictly/ m-strongly) $\Ps{0}$-\textbf{weakly convex} if there is $\sigma^*>0$ such that for all $\sigma > \sigma^*$
    \[
        \Ps{\theta}^\sigma(A) := \Ps{0}(\sigma^{-1}(A-\theta)) \,\, \text{for all $\theta \in \R^n$ and for all $A \in \mathcal{B}(\R^n)$}
    \]
    gives a (strictly/ m-strongly) convex relaxation $\theta \in \R^n \mapsto \E_{\theta, \sigma}(f)$.
\end{definition}
As a direct result, we obtain the following by substitution.
\begin{lemma}
    In the setting of \cref{def:Weak Convexity}, if the probability measure $\Ps{0}$ has a density $k_0$ with respect to the Lebesgue measure $\lambda$, then for all $\sigma > 0$, for all $\theta \in \R^n$, and for all $A \in \mathcal{B}(\R^n)$, we have
    \begin{align*}
        \Ps{\theta}^\sigma(A)
        &=
        \Ps{0}(\sigma^{-1}(A-\theta))
    \tag*{\small{\text{(definition $\Ps{\theta}^\sigma$)}}}
        \\
        &=
        \int_{\sigma^{-1}(A-\theta)} k_0 \, \mathrm{d}\lambda
    \tag*{\small{\text{(definition density)}}}
        \\
        &=
        \int_{A} \sigma^{-n} k_0(\sigma^{-1}(\cdot -\theta)) \, \mathrm{d}\lambda \,.
    \tag*{\small{\text{(substitution)}}}
    \end{align*}
    That is, $k_{\theta, \sigma} := \sigma^{-n} k_0(\sigma^{-1}(\cdot -\theta))$ is a Lebesgue density of $\Ps{\theta}^\sigma$.
\end{lemma}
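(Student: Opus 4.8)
The plan is to establish the claim by making the single change-of-variables computation rigorous. The statement is really the assertion that the affine push-forward defining $\Ps{\theta}^\sigma$ has the Lebesgue density $k_{\theta,\sigma} = \sigma^{-n} k_0(\sigma^{-1}(\cdot - \theta))$, so the whole argument reduces to tracking how the density $k_0$ transforms under the map $T_{\theta,\sigma}\colon x \mapsto \sigma^{-1}(x-\theta)$.

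First I would unfold the definition from \cref{def:Weak Convexity}, writing $\Ps{\theta}^\sigma(A) = \Ps{0}(\sigma^{-1}(A-\theta))$ and noting that $\sigma^{-1}(A-\theta) = T_{\theta,\sigma}(A)$. Using the hypothesis $\Ps{0} = k_0\lambda$, this equals $\int_{T_{\theta,\sigma}(A)} k_0 \, \mathrm{d}\lambda$; up to here nothing beyond definitions is used.

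The decisive step is the substitution $y = \sigma^{-1}(x-\theta)$, i.e.\ $x = \sigma y + \theta$. Because $T_{\theta,\sigma}$ is an invertible affine map with linear part $\sigma^{-1}\mathrm{I}$, its Jacobian determinant is the constant $\det(\sigma^{-1}\mathrm{I}) = \sigma^{-n}$. Applying the change-of-variables theorem for the Lebesgue integral, as $x$ ranges over $A$ the image $y$ ranges over $T_{\theta,\sigma}(A)$ with $\mathrm{d}y = \sigma^{-n}\,\mathrm{d}x$, so that $\int_{T_{\theta,\sigma}(A)} k_0 \, \mathrm{d}\lambda = \int_A \sigma^{-n} k_0(\sigma^{-1}(x-\theta)) \, \lambda(\mathrm{d}x)$. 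Reading off the integrand identifies $k_{\theta,\sigma}$ as a density of $\Ps{\theta}^\sigma$ with respect to $\lambda$.

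The only real obstacle is the clean invocation of the change-of-variables formula and getting the orientation of the Jacobian factor $\sigma^{-n}$ right (rather than $\sigma^{n}$); since the map is affine this is routine and requires no regularity beyond measurability of $k_0$. As a sanity check I would confirm that $k_{\theta,\sigma}$ is genuinely a probability density by taking $A = \R^n$, which gives $\Ps{\theta}^\sigma(\R^n) = \Ps{0}(\R^n) = 1$, consistent with $\int_{\R^n} k_{\theta,\sigma}\,\mathrm{d}\lambda = 1$.
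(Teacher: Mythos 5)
Your proposal is correct and follows exactly the route the paper takes: unfold the definition of $\Ps{\theta}^\sigma$, insert the density $k_0$, and apply the change of variables $y = \sigma^{-1}(x-\theta)$ with Jacobian determinant $\sigma^{-n}$ to read off $k_{\theta,\sigma}$. The paper's proof is just this three-step computation embedded in the statement, so there is nothing to add.
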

The goal for the rest of the section will be the derivation of a weak convexity property for stochastic relaxations. The approach we take will be based on Fourier analysis and will work with a relaxation of the superposition $f = r + g: \R^n \to \R$ based on a class of densities that are rapidly decreasing. The function $r$ will be assumed to be strongly convex, and $g$ can be understood as a disturbance. While in \cref{ex:Rastrigin}, $g$ was a finite sum of cosines and the densities were isotropic Gaussian, we aim to find a general setting that provides interpretation for the success of stochastic relaxations in global and robust optimization.

\begin{theorem}[Preserving Convexity]
\label{thm:strongconv}
    Let a relaxation as defined in \cref{def:relProb} be given by $(f, \R^n, \mathcal{B}(\R^n), (\Ps{\theta})_{\theta \in \R^n})$ with
    \begin{itemize}
        \item  $\Ps{\theta}(A) = \Ps{0}(A-\theta)$ for all $\theta \in \R^n$ and for all $A \in \mathcal{B}(\R^n)$, and
        \item $f$ is strongly convex, i.e., there exists $m > 0$ such that for all $v,w \in \R^n$ and for all $t \in [0, 1]$
        \[
            f(t v + (1-t) w) \leq t f(v) + (1-t) f(w) - \tfrac{1}{2} m t(1-t) \norm{v-w}^2 \,.
        \]
    \end{itemize}
    Then, the relaxation $\theta \in \R^n \mapsto \E_\theta(f)$ is $m$-strongly convex.
\end{theorem}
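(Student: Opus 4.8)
The plan is to show that strong convexity of $f$ is inherited by the relaxation $\theta \mapsto \E_\theta(f)$ directly from the defining inequality, using the translation structure $\Ps{\theta}(A) = \Ps{0}(A - \theta)$. The key observation is that translating the measure by $\theta$ is equivalent to translating the integrand: for any $\theta \in \R^n$ we have $\E_\theta(f) = \int_{\R^n} f(x)\, \Ps{\theta}(\mathrm{d}x) = \int_{\R^n} f(x + \theta)\, \Ps{0}(\mathrm{d}x)$, exactly the change of variables established at the start of the proof of the Preserving Differentiability theorem. This rewrites the relaxation as an average over $\Ps{0}$ of translates of $f$, which is the form that makes the convexity inequality transparent.

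\medskip

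First I would fix $v, w \in \R^n$ and $t \in [0,1]$, and apply the strong convexity inequality of $f$ pointwise at the shifted arguments $v + x$ and $w + x$ for each $x \in \R^n$. Since $t(v+x) + (1-t)(w+x) = (tv + (1-t)w) + x$, the hypothesis gives
\begin{align*}
    f\big((tv+(1-t)w) + x\big)
    \leq\ & t\, f(v+x) + (1-t)\, f(w+x) \\
    & - \tfrac{1}{2} m\, t(1-t) \norm{v-w}^2 \,,
\end{align*}
where crucially the quadratic term $\norm{(v+x)-(w+x)}^2 = \norm{v-w}^2$ does not depend on $x$. The next step is to integrate both sides against $\Ps{0}$; monotonicity of the integral preserves the inequality, the constant term integrates to itself because $\Ps{0}$ is a probability measure, and linearity distributes the integral across the right-hand side.

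\medskip

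Carrying out the integration and then rewriting each integral over $\Ps{0}$ back as an expectation over the appropriate translated measure via the identity above yields
\[
    \E_{tv+(1-t)w}(f) \leq t\, \E_v(f) + (1-t)\, \E_w(f) - \tfrac{1}{2} m\, t(1-t) \norm{v-w}^2 \,,
\]
which is precisely the statement that $\theta \mapsto \E_\theta(f)$ is $m$-strongly convex. The main technical point to verify, rather than a genuine obstacle, is that all three integrals are finite and the pointwise inequality may legitimately be integrated: this is guaranteed by the standing assumption $f \in \mathcal{L}^1(\R^n, \mathcal{B}(\R^n), \Ps{\theta})$ for every $\theta$, which ensures each term $\E_v(f)$, $\E_w(f)$, $\E_{tv+(1-t)w}(f)$ is well-defined and finite. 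No differentiability of $f$ or of the density is needed, so the argument is purely order-theoretic and measure-theoretic; the translation-invariance of the family is the only structural ingredient doing real work.
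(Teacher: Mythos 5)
Your proposal is correct and follows essentially the same route as the paper's proof: rewrite $\E_\theta(f)$ as $\int f(x+\theta)\,\Ps{0}(\mathrm{d}x)$ via the translation structure, apply the strong convexity inequality pointwise at the shifted arguments (noting the quadratic term is translation-invariant), and integrate using linearity and the fact that $\Ps{0}$ is a probability measure. Your explicit remark that integrability of all three terms is guaranteed by the standing assumption $f \in \mathcal{L}^1(\R^n, \mathcal{B}(\R^n), \Ps{\theta})$ is a small point the paper leaves implicit, but otherwise the arguments coincide.
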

\begin{proof}
    Let $\psi(\theta, x) := \psi_\theta(x) := x+\theta$ for all $x \in \R^n$ and for all $\theta \in \R^n$. Further let $v,w \in \R^n$ and $t \in [0,1]$, then
\begingroup
\allowdisplaybreaks
    \begin{align*}
        \E_{tv + (1-t)w}(f)
        &=
        \int_{\R^n} f \, \mathrm{d}\Ps{tv + (1-t)w}
    \tag*{\small{\text{(definition expected value)}}}
        \\
        &=
        \int_0^\infty \Ps{tv + (1-t)w}(\{ x \in \R^n \mid f(x) > t \}) \, \mathrm{d}t
    \tag*{\small{\text{(definition Lebesgue integral)}}}
        \\
        &=
        \int_0^\infty \Ps{0}\big(\psi_{tv + (1-t)w}^{-1}(\{ x \in \R^n \mid f(x) > t \})\big) \, \mathrm{d}t
    \tag*{\small{\text{(assumption)}}}
        \\
        &=
        \int_0^\infty \Ps{0}(\{ x \in \R^n \mid (f\circ \psi_{tv + (1-t)w})(x) > t \}) \, \mathrm{d}t
    \tag*{\small{\text{(definition inverse)}}}
        \\
        &=
        \int_{\R^n} f(x+ tv + (1-t)w) \, \Ps{0}(\mathrm{d}x)
    \tag*{\small{\text{(definition Lebesgue integral)}}}
        \\
        &=
        \int_{\R^n} f(t(x+v) + (1-t)(x+w)) \, \Ps{0}(\mathrm{d}x) 
    \tag*{\small{\text{($x = t x + (1-t) x$)}}}
        \\
        &\leq
        \int_{\R^n} t f(x+v) + (1-t) f(x+w)
        - \tfrac{1}{2} m t(1-t) \norm{v-w}^2 \, \Ps{0}(\mathrm{d}x) 
    \tag*{\small{\text{(third assumption)}}}
        \\
        &=
        t \int_{\R^n} f(x+v) \, \Ps{0}(\mathrm{d}x)
        +
        (1-t) \int_{\R^n} f(x+w) \, \Ps{0}(\mathrm{d}x) 
        - \tfrac{1}{2} m t(1-t) \norm{v-w}^2 
    \tag*{\small{\text{(linearity, probability measure)}}}
        \\
        &=
        t \E_v(f)
        +
        (1-t) \E_w(f)
        - \tfrac{1}{2} m t(1-t) \norm{v-w}^2 \,.
    \tag*{\small{\text{(first steps reversed)}}} 
    \end{align*}
\endgroup
\end{proof}
\begin{remark}
    Preservation also holds for (strictly) convex functions.
\end{remark}

The following result is developed in preparation of \cref{thm:fourier}.
To retain the generality of this section and obtain a convexity result similar to \cref{ex:Rastrigin}, the distributional formulation of the Fourier transform is needed. This result will pave the way for very practical corollaries at the end of this section.

\begin{lemma}
\label{lem:fourier1}
    Let $\lambda$ be the Lebesgue measure
    \begin{itemize}
        \item $g: \R^n \to \R$ measurable with $T_g \in S'(\R^n)$,\footnote{$T_g$ denotes the functional $\varphi \mapsto \int_{\R^n} g \varphi \, \mathrm{d}\lambda$. This property ensures that the integral exists. Continuity and linearity is generally fulfilled. $\mathcal{S}$ denotes the Schwartz space and $\mathcal{S}'$ its dual space.}
        \item $\mathcal{F}(T_g)(\varphi) = 0$ for all $\varphi \in C_c^\infty(U_\varepsilon(0))$ for some $\varepsilon > 0$, and
    \end{itemize}
    Given $h \in \mathcal{S}(\R^n)$ and $\sigma \in \R$, then for all $y \in \R^n$ uniformly
    \[
        T_g\big(\sigma^{-n} h(\sigma^{-1} (\cdot-y))\big) = \int_{\R^n} g(x) \sigma^{-n} h(\sigma^{-1} (x-y)) \, \lambda(\mathrm{d}x) \xrightarrow{\sigma \to \infty} 0 \, .
    \]
    \end{lemma}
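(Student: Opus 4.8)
The plan is to pass to the Fourier side, where the hypothesis on $g$ becomes a support condition that is incompatible with the concentration of the rescaled kernel. First I would read the quantity as a convolution: writing $\Phi_\sigma(u) := \sigma^{-n} h(\sigma^{-1} u)$ and $\check\Phi_\sigma(u) := \Phi_\sigma(-u)$, the integral equals $F_\sigma(y) := T_g\big(\Phi_\sigma(\cdot - y)\big) = (g * \check\Phi_\sigma)(y)$, a smooth tempered function of $y$. Applying the distributional Fourier transform and the dilation rule gives $\mathcal{F} F_\sigma = \mathcal{F}(T_g) \cdot \widehat{\check\Phi_\sigma}$ with $\widehat{\check\Phi_\sigma}(\omega) = \hat h(-\sigma\omega)$, which concentrates at $\omega = 0$ as $\sigma \to \infty$. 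The second hypothesis says precisely that $\mathrm{supp}\,\mathcal{F}(T_g) \subseteq \{\, \norm{\omega} \geq \varepsilon \,\}$, so $\mathcal{F}(T_g)$ only ever probes $\widehat{\check\Phi_\sigma}$ on $\{\norm{\omega} \geq \varepsilon\}$, where it equals $\hat h$ evaluated at points of norm $\geq \sigma\varepsilon$, i.e.\ where $\hat h$ is negligible.

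Next I would make this localization rigorous with a frequency cutoff. Choose $\eta \in C^\infty(\R^n)$ with $\eta \equiv 0$ on $U_{\varepsilon/2}(0)$ and $\eta \equiv 1$ on $\{\norm{\omega} > \tfrac34 \varepsilon\}$. Since $1-\eta$ vanishes on an open neighborhood of $\mathrm{supp}\,\mathcal{F}(T_g)$, we have $(1-\eta)\mathcal{F}(T_g) = 0$, hence $\mathcal{F} F_\sigma = \mathcal{F}(T_g)\cdot \Lambda_\sigma$ with $\Lambda_\sigma := \eta \cdot \hat h(-\sigma\,\cdot) \in \mathcal{S}(\R^n)$. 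On $\mathrm{supp}\,\eta$ one has $\norm{\sigma\omega} \geq \tfrac12 \sigma\varepsilon$, and because $h \in \mathcal{S}$ forces $\hat h$ and all its derivatives to decay faster than any polynomial, a Leibniz expansion shows that every Schwartz seminorm of $\Lambda_\sigma$ tends to $0$ as $\sigma \to \infty$; the factors $\sigma^{\abs{\beta}}$ produced by differentiating $\hat h(-\sigma\,\cdot)$ and the weights $\omega^\alpha$ are all dominated by this decay. Consequently $\mu_\sigma := \mathcal{F}^{-1}\Lambda_\sigma \to 0$ in $\mathcal{S}(\R^n)$, and unwinding the transform yields the clean representation $F_\sigma = g * \mu_\sigma$.

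It then remains to convert $\mu_\sigma \to 0$ in $\mathcal{S}$ into the claimed convergence of $F_\sigma(y) = T_g(\tau_y \check\mu_\sigma)$, where $(\tau_y \varphi)(x) := \varphi(x-y)$, and this is where I expect the real difficulty to lie, namely the \emph{uniformity in $y$}. Convergence for each fixed $y$ is immediate from the continuity of $T_g \in \mathcal{S}'$: one has $\abs{F_\sigma(y)} \leq C\, p_N(\tau_y \check\mu_\sigma)$ for some fixed seminorm $p_N$, and $p_N(\mu_\sigma) \to 0$. The obstacle is that translating by $y$ inflates the polynomially weighted seminorms by a factor growing like $(1+\norm{y})^N$, so the bare continuity estimate is not uniform; the uniform statement genuinely relies on the frequency mismatch, in which $\mathcal{F}(T_g)$ is blind to low frequencies while $\mu_\sigma$ is assembled entirely from the negligible high-frequency tail of $\hat h$. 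The cleanest way to cash this in is to note that whenever $g$ is bounded, which is the case of the bounded disturbances motivating this section, one estimates $\sup_y \abs{F_\sigma(y)} \leq \norm{g}_{L^\infty}\, \norm{\mu_\sigma}_{L^1}$, and $\norm{\mu_\sigma}_{L^1} \to 0$ because $\Lambda_\sigma \to 0$ in $\mathcal{S}$ controls a weighted sup-seminorm of $\mu_\sigma$. To push beyond the bounded case I would rewrite each monomial $y^\gamma e^{i\langle \omega, y\rangle}$ appearing in the seminorm estimate as an $\omega$-derivative of $e^{i\langle \omega, y\rangle}$ and integrate those derivatives by parts onto $\Lambda_\sigma$, exchanging growth in $y$ for additional $\sigma$-decay of the kernel; this transfer of derivatives, and the bookkeeping of how far the admissible growth of $g$ may go, is the step that warrants the most care.
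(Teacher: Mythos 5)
Your route is essentially the paper's: pass to the Fourier side via $T_g(\varphi)=(\mathcal{F}T_g)(\mathcal{F}^{-1}\varphi)$, use the vanishing of $\mathcal{F}(T_g)$ near the origin to insert a smooth cutoff supported away from $0$, and exploit the rapid decay of the rescaled transform of $h$ on the support of that cutoff. (The paper phrases this with a partition of unity $u_1+u_2\equiv 1$ and the test function $u_2\,e^{i\langle y,\cdot\rangle}s(\sigma\cdot)$ with $s=\mathcal{F}^{-1}h$, which is your $\Lambda_\sigma$ up to reflection and convention.) The one place you diverge is exactly the place you flag: uniformity in $y$. The paper disposes of it with the remark that $\abs{e^{i\langle y,\cdot\rangle}}\equiv 1$, but that only controls the zeroth-order seminorms; the continuity estimate for $\mathcal{F}T_g$ involves finitely many seminorms with derivatives, each derivative landing on $e^{i\langle y,\cdot\rangle}$ produces a factor $y_j$, and the resulting bound degrades like $(1+\norm{y})^N\sigma^{-M}$ — pointwise convergence, not uniform. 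So your diagnosis is correct, and your $L^\infty$--$L^1$ estimate $\sup_y\abs{F_\sigma(y)}\leq\norm{g}_{L^\infty}\norm{\mu_\sigma}_{L^1}$ with $\norm{\mu_\sigma}_{L^1}\to 0$ is a genuine repair for bounded $g$, which is the case actually used downstream (absolutely convergent sums of cosines). Be aware, though, that the integration-by-parts program you sketch for unbounded $g$ cannot succeed without further growth restrictions: under the stated hypotheses the uniform conclusion can fail outright. In one dimension, $g(x)=x^2\cos(\xi x)$ with $\abs{\xi}\geq\varepsilon$ is tempered, has distributional Fourier transform supported in $\{\pm\xi\}$, and the substitution $x=y+\sigma u$ produces the contribution $y^2\,\mathrm{Re}\big(e^{i\xi y}\hat h(-\sigma\xi)\big)$, which tends to $0$ for each fixed $y$ but is unbounded in $y$ for every fixed $\sigma$. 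In short: same approach as the paper, but you have correctly located — and, for the case that matters in the sequel, closed — a gap that the paper's own proof leaves open.
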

    \begin{proof}
    Applying general properties of the Fourier transform, we get the identity
    \begin{align*}
       T_g\big(\sigma^{-n} h(\sigma^{-1} (\cdot-y))\big)
       &=
       (\mathcal{F}^{-1}(\mathcal{F} T_g))\big(\sigma^{-n} h(\sigma^{-1} (\cdot-y))\big)
    \tag*{\small{\text{(Fourier inversion for distributions)}}}
       \\
       &=
       (\mathcal{F} T_g)\big(\mathcal{F}^{-1}\sigma^{-n} h(\sigma^{-1} (\cdot-y))\big)
    \tag*{\small{\text{(definition inverse transform)}}}
       \\
       &=
       (\mathcal{F} T_g)\big(e^{ i \langle y, \cdot \rangle} \sigma^{-n} \mathcal{F}^{-1}(h(\sigma^{-1} \cdot))\big)
    \tag*{\small{\text{(shift and linearity)}}}
       \\
       &=
       (\mathcal{F} T_g)\big(e^{ i \langle y, \cdot \rangle} \mathcal{F}^{-1}(h)(\sigma\cdot)\big) \,.
    \tag*{\small{\text{(scaling)}}}
    \end{align*}
    The Fourier transform is an automorphism on Schwartz space, therefore $s := \mathcal{F}^{-1}(h)$ is a Schwartz function.
    Now, due to continuity of $\mathcal{F} T_g$, we would like to have
    \[
        e^{i \langle y, \cdot \rangle} s(\sigma\cdot) \xrightarrow{\sigma \to \infty} 0 \,\, \text{for all $y \in \R^n$ uniformly in $\mathcal{S}$.}
    \]
    This, however, does not hold in general. We need the assumption ${\mathcal{F}T_g}_{|U_\varepsilon(0)} \equiv 0$.\\ To this end, consider a smooth partition of unity $u_1, u_2: \R^n \to \R$ with $u_1 + u_2 \equiv 1$ such that $\mathrm{supp} \, u_1 \subseteq  \overline{U_\varepsilon(0)}$ and $\mathrm{supp}\, u_2 \subseteq \R^n \setminus U_{\varepsilon/2}(0)$
    and see that
    \begin{align*}
        \mathcal{F}T_g\big( e^{ i \langle y, \cdot \rangle} s(\sigma\cdot)\big)
        &\equiv
        \mathcal{F}T_g\big((u_1+u_2) e^{ i \langle y, \cdot \rangle} s(\sigma\cdot)\big)
    \tag*{\small{\text{(definition $u_1$, $u_2$)}}}
        \\
        &\equiv
        \mathcal{F}T_g\big(u_1 e^{ i \langle y, \cdot \rangle} s(\sigma\cdot)\big) + \mathcal{F}T_g\big(u_2 e^{ i \langle y, \cdot \rangle} s(\sigma\cdot)\big)
    \tag*{\small{\text{(linearity)}}}
        \\
        &\equiv
        0 + \mathcal{F}T_g\big(u_2 e^{ i \langle y, \cdot \rangle} s(\sigma\cdot)\big) \,.
    \tag*{\small{\text{(assumption)}}}
    \end{align*}
    Due to $s$ being a Schwartz function and $\big|e^{i \langle y, \cdot \rangle}\big| \equiv 1$ for all $y \in \R^n$, we have
    \[
        u_2 e^{i \langle y, \cdot \rangle} s(\sigma\cdot) \xrightarrow{\sigma \to \infty} 0 \,\, \text{for all $y \in \R^n$ uniformly in $\mathcal{S}$.}
    \]
    By continuity of $\mathcal{F} T_g$ this implies the statement of the theorem.
    \end{proof}

Now, we will describe a setting for which the relaxed problem is convex when the original cost function may not be. As described previously, we consider the case where the function $f$ is a strongly convex function superimposed with a disturbance, impairing convexity and thus global optimization.

\begin{theorem}[Filtering]
\label{thm:fourier}
    Let $\lambda$ be the Lebesgue measure and $f: \R^n \to \R$ be a function that admits a decomposition
    \[
        f \equiv r + g
    \]
    such that
    \begin{itemize}
        \item $r$ is $m$-strongly convex,
        \item $\mathcal{F}(g)_{|U_\varepsilon(0)} \equiv 0$ in the distributional sense for some $\varepsilon > 0$, and
        \item $rs,gs \in \mathcal{L}^1(\R^n)$ for all Schwartz functions $s \in \mathcal{S}(\R^n)$.
    \end{itemize}
    Then for every non-negative, somewhere non-zero Schwartz function $k \in \mathcal{S}(\R^n)$ with $c_k := \big(\int_{\R^n} k \,\mathrm{d}\lambda\big)^{-1}$ and $m^* < m$, $f$ is $m^*$-strongly $(c_k k)\lambda$-weakly convex, i.e., there exists $\sigma_k^*$ such that for all $\sigma > \sigma_k^*$
    \[
        \theta \in \R^n \longmapsto \E_{\theta, \sigma}(f)
        = \int_{\R^n} f(x)c_k\sigma^{-n}k(\sigma^{-1}(x-\theta)) \, \lambda(\mathrm{d}x)
    \]
    is $m^*$-strongly convex.
\end{theorem}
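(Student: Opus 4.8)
The plan is to split the relaxation by linearity of the integral, $\E_{\theta,\sigma}(f) = \E_{\theta,\sigma}(r) + \E_{\theta,\sigma}(g)$, and to treat the strongly convex part $r$ and the disturbance $g$ separately. For the convex part, note first that the scaled family is still a translation family: writing $Q^\sigma(A) := \Ps{0}(\sigma^{-1}A)$, which is a probability measure with Lebesgue density $c_k\sigma^{-n}k(\sigma^{-1}\,\cdot\,)$ (integrating to $1$ by the choice of $c_k$), we have $\Ps{\theta}^\sigma(A) = Q^\sigma(A-\theta)$. Hence \cref{thm:strongconv}, applied with base measure $Q^\sigma$ in place of $\Ps{0}$, shows that $\theta \mapsto \E_{\theta,\sigma}(r)$ is $m$-strongly convex for every $\sigma > 0$; integrability of $r$ against $Q^\sigma$ is guaranteed by the assumption $rs \in \mathcal{L}^1(\R^n)$ for Schwartz $s$. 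Equivalently, $\E_{\cdot,\sigma}(r) - \tfrac{m}{2}\norm{\cdot}^2$ is convex.

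The heart of the argument is to show that $g$ contributes nothing to the second-order structure in the limit $\sigma \to \infty$. First I would justify, via Leibniz's integral rule (using that $x \mapsto \sigma^{-n}(\partial_i\partial_j k)(\sigma^{-1}(x-\theta))$ is, for fixed $\sigma$ and $\theta$, a Schwartz function, so that its product with $g$ lies in $\mathcal{L}^1$, and dominating the shifted densities over a compact $\theta$-ball by a fixed Schwartz majorant), that $\theta \mapsto \E_{\theta,\sigma}(g)$ is twice continuously differentiable with
\[
\partial_{\theta_i}\partial_{\theta_j}\E_{\theta,\sigma}(g) = c_k\,\sigma^{-2}\, T_g\big(\sigma^{-n}(\partial_i\partial_j k)(\sigma^{-1}(\cdot-\theta))\big).
\]
Since $\partial_i\partial_j k \in \mathcal{S}(\R^n)$ and $\mathcal{F}(g)$ vanishes on $U_\varepsilon(0)$ in the distributional sense, \cref{lem:fourier1} (applied with $h = \partial_i\partial_j k$ and $y = \theta$) yields $T_g(\sigma^{-n}(\partial_i\partial_j k)(\sigma^{-1}(\cdot-\theta))) \to 0$ as $\sigma \to \infty$, uniformly in $\theta$. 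As there are only finitely many index pairs $(i,j)$, every entry of the Hessian of $\E_{\cdot,\sigma}(g)$ tends to $0$ uniformly in $\theta$, and hence so does its operator norm.

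It then remains to combine the two parts. Given $m^* < m$, set $\eta := m - m^* > 0$ and choose $\sigma_k^*$ so large that for all $\sigma > \sigma_k^*$ the Hessian of $\E_{\cdot,\sigma}(g)$ has operator norm below $\eta$ uniformly in $\theta$; then $\E_{\cdot,\sigma}(g) + \tfrac{\eta}{2}\norm{\cdot}^2$ has everywhere positive semidefinite Hessian and is therefore convex. Writing
\[
\E_{\cdot,\sigma}(f) - \tfrac{m^*}{2}\norm{\cdot}^2 = \Big(\E_{\cdot,\sigma}(r) - \tfrac{m}{2}\norm{\cdot}^2\Big) + \Big(\E_{\cdot,\sigma}(g) + \tfrac{\eta}{2}\norm{\cdot}^2\Big)
\]
exhibits $\E_{\cdot,\sigma}(f) - \tfrac{m^*}{2}\norm{\cdot}^2$ as a sum of two convex functions, hence convex, which is exactly $m^*$-strong convexity of the relaxation. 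I expect the main obstacle to be the $g$-part: rigorously interchanging the two $\theta$-derivatives with the integral and, above all, ensuring that the decay supplied by \cref{lem:fourier1} is uniform in the translation parameter $\theta$, since only uniformity lets a single threshold $\sigma_k^*$ work for the global convexity conclusion.
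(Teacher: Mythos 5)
Your proposal is correct and follows essentially the same route as the paper's proof: split $\E_{\theta,\sigma}(f)=\E_{\theta,\sigma}(r)+\E_{\theta,\sigma}(g)$, invoke \cref{thm:strongconv} for the $r$-part, differentiate under the integral and apply \cref{lem:fourier1} with $h=\partial_i\partial_j k$ to make the Hessian of the $g$-part uniformly small, then absorb the perturbation into the strong-convexity margin $m-m^*$. The only difference is cosmetic: you phrase the final combination as a sum of convex functions with an operator-norm bound on the $g$-Hessian, while the paper bounds the quadratic form entrywise with the threshold $\varepsilon\leq (m-m^*)/n^2$; your variant is, if anything, slightly cleaner since it avoids referring to a Hessian of $\E_{\cdot,\sigma}(r)$.
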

\begin{proof}
    Pick a non-negative, somewhere non-zero Schwartz function $k \in \mathcal{S}(\R^n)$. For all $\sigma > 0$ and for all $\theta \in \R^n$, we have
    \[  
        \E_{\theta, \sigma}(f) = \E_{\theta, \sigma}(r) + \E_{\theta, \sigma}(g)
    \]
    and we know by direct application of \cref{thm:strongconv} that $\theta \in \R^n \mapsto \E_{\theta, \sigma}(r)$ is strongly convex with the same constant as $r$ for all $\sigma > 0$. We prove the result by showing that for all $\varepsilon > 0$ there exists $\sigma_\varepsilon$ such that all partial second derivatives of $\E_{\theta, \sigma_\varepsilon}(g)$ are uniformly bounded by $\varepsilon$. To see why this is, observe the following arguments.

    \begin{itemize}
        \item For all $i,j\in \N_{\leq n}$, $\sigma >0$, $\theta \in \R^n$, and $x \in \R^n$ there is the identity
        \[
            \partial_{\theta_i,\theta_j} k(\sigma^{-1}(x-\theta)) = \sigma^{-2}(\partial_{ij} k)(\sigma^{-1}(x-\theta))\,.
        \]
        \item By \cref{theorem:diff}, we have
        \begin{align*}
            \partial_{\theta_i,\theta_j} \E_{\theta, \sigma}(g) 
            = \sigma^{-2} \int_{\R^d} g(x) \sigma^{-n} c_k(\partial_{ij} k)(\sigma^{-1}(x-\theta)) \, \lambda(\mathrm{d}x) \,.
        \end{align*}
        \item The function $x \in \R^n \mapsto c_k (\partial_{ij}k)(x)$ is again a Schwartz function and together with $g$ fulfills the assumptions of \cref{lem:fourier1} for all $i,j \in \N_{\leq n}$.
        This provides a uniform bound of $\varepsilon$ for all $\sigma > \sigma_{\varepsilon, i,j}$ on
        \[
            \int_{\R^d} g(x) \sigma^{-n} c_k(\partial_{ij} k)(\sigma^{-1}(x-\theta)) \, \lambda(\mathrm{d}x)
        \]
        and thereby due to the factor $\sigma^{-2}$ and without loss of generality on $\partial_{\theta_i,\theta_j} \E_{\theta, \sigma}(g)$.
        \item By picking $\sigma^*_{\varepsilon}$, the largest of $(\sigma_{\varepsilon,i, j})_{i,j \in \N_{\leq n}}$, one obtains a uniform bound of $\varepsilon$ on all second partial derivatives of $\theta \in \R^n \mapsto \E_{\theta, \sigma^*_{\varepsilon}}(g)$.
    \end{itemize}
    It remains to be shown that if $\sigma$ is picked large enough, this implies strong convexity for an arbitrary parameter $m^*$ smaller than the convexity parameter of $r$, namely $m$.
    For all $x \in \R^n$ we want to have
    \[
        x^T (H(\E_{\cdot, \sigma}(r)) - m^*I_n + \varepsilon 1_n)x \geq 0 \,,
    \]
    where $H(\E_{\cdot, \sigma}(r))$ denotes the Hessian of $\E_{\cdot, \sigma}(r)$, $I_n$ is the identity, and $1_n$ is a matrix such that
    \[
        \abs{1_{n,ij}} \leq 1 \,\, \text{for all $i,j$.}
    \]
    $\varepsilon 1_n$ corresponds to the Hessian of $\E_{\cdot, \sigma}(g)$. We have
    \[
        x^T (H(\E_{\cdot, \sigma}(r)) - mI_n)x \geq 0
    \]
    by \cref{thm:strongconv} and, therefore, we show
    \begin{align*}
        x^T ((m-m^*)I_n + \varepsilon 1_n)x 
        &\geq
        (m-m^*) \max_{i} \abs{x_i}^2 - \varepsilon n^2 \max_i \abs{x_i}^2
        \\
        &=
        \max_{i} \abs{x_i}^2 (m-m^*- \varepsilon n^2)
        \geq
        0 \,.
    \end{align*}
    Which is true for $\varepsilon \leq \tfrac{m-m^*}{n^2}$.
\end{proof}

\cref{thm:fourier} suggests non-trivial sufficient conditions for functions on the reals whose stochastic relaxations turn out to be convex functions when paired with probability measures that have Schwartz densities. There are a number of handy results we obtain with \cref{thm:fourier}.

\begin{corollary}[Deterministic Cosine Disturbance]
\label{cor:cosinedist}
    Let $f=r+g: \R^n \to \R$ be the sum of a polynomially bounded, $m$-strongly convex function $r$ and for all $x\in\R^n$
    \[
        g(x) = \sum_{j = 1}^\infty a_j \cos(\langle \xi_j, x \rangle + \psi_j)
    \]
    with 
    \begin{itemize}
        \item $a_j \in \R$ with $\sum_j |a_j| < \infty$, and
        \item $\xi_j \in \R^n$ such that there exists $\varepsilon > 0$ with $\norm{\xi_j} \geq \varepsilon$ for all $j \in \N$.
    \end{itemize}
    Let $\Ps{k}$ be a probability measure with a rapidly decreasing Lebesgue density $k$ and $m^* < m$. Then, $f$ is $m^*$-strongly $\Ps{k}$-weakly convex.
\end{corollary}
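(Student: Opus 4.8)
The plan is to apply \cref{thm:fourier} directly, with $r$ being the given strongly convex part and $g$ the cosine series. To do so, I need to verify the three hypotheses of \cref{thm:fourier} for this concrete $f = r + g$, and then the conclusion (that $f$ is $m^*$-strongly $\Ps{k}$-weakly convex) follows immediately. The first hypothesis, that $r$ is $m$-strongly convex, is given. The remaining work is therefore to check (i) that $\mathcal{F}(g)$ vanishes on a neighborhood $U_\varepsilon(0)$ of the origin in the distributional sense, and (ii) that $rs, gs \in \mathcal{L}^1(\R^n)$ for every Schwartz function $s$.

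First I would handle the Fourier-support condition on $g$. Each summand $a_j \cos(\langle \xi_j, x\rangle + \psi_j)$ can be written as $\tfrac{a_j}{2}\big(e^{i\psi_j} e^{i\langle \xi_j, x\rangle} + e^{-i\psi_j} e^{-i\langle \xi_j, x\rangle}\big)$, and the distributional Fourier transform of a plane wave $e^{\pm i \langle \xi_j, x\rangle}$ is a Dirac mass concentrated at $\pm \xi_j$ (up to the normalizing constant). Hence $\mathcal{F}(g)$ is a sum of point masses located at the frequencies $\pm \xi_j$. Since $\norm{\xi_j} \geq \varepsilon$ for all $j$ by assumption, none of these point masses lies in the open ball $U_\varepsilon(0)$, so $\mathcal{F}(g)$ tested against any $\varphi \in C_c^\infty(U_\varepsilon(0))$ gives zero. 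The main technical obstacle lies here: I must justify that $g$ defines a tempered distribution $T_g \in \mathcal{S}'(\R^n)$ and that term-by-term Fourier transformation is legitimate. The uniform bound $\sum_j |a_j| < \infty$ controls $g$ in $L^\infty$, which guarantees $T_g \in \mathcal{S}'$, and the same summability lets me pass the (continuous, linear) Fourier transform through the series, so that $\mathcal{F}(g) = \sum_j \mathcal{F}\big(a_j \cos(\langle \xi_j, \cdot\rangle + \psi_j)\big)$ converges in $\mathcal{S}'$ with support in $\{\pm \xi_j : j \in \N\} \subseteq \R^n \setminus U_\varepsilon(0)$.

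Next I would verify the integrability condition $rs, gs \in \mathcal{L}^1(\R^n)$ for all $s \in \mathcal{S}(\R^n)$. For $g$ this is immediate from $\norm{g}_\infty \leq \sum_j |a_j| < \infty$, since a bounded function times a Schwartz function (which decays faster than any polynomial) is integrable. For $r$ I invoke the polynomial-boundedness hypothesis: $|r(x)| \leq C(1 + \norm{x})^p$ for some $C, p$, and multiplying by the rapid decay of $s$ again yields an integrable product. Finally, identifying the rapidly decreasing Lebesgue density $k$ of $\Ps{k}$ with the Schwartz density in \cref{thm:fourier} (after normalizing by $c_k = (\int k \, \mathrm{d}\lambda)^{-1}$, which is well-defined and positive because $k$ is non-negative and somewhere non-zero), all hypotheses of \cref{thm:fourier} are met. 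The theorem then furnishes $\sigma_k^*$ such that $\theta \mapsto \E_{\theta,\sigma}(f)$ is $m^*$-strongly convex for all $\sigma > \sigma_k^*$, which is precisely the assertion that $f$ is $m^*$-strongly $\Ps{k}$-weakly convex.
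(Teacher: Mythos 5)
Your proposal is correct and follows essentially the same route as the paper: write each cosine as a sum of complex exponentials, identify $\mathcal{F}(T_g)$ as a sum of Dirac masses at $\pm\xi_j$ (all outside $U_\varepsilon(0)$), and invoke \cref{thm:fourier}. You are in fact slightly more thorough than the paper's proof, which only verifies the Fourier-support condition explicitly and leaves the checks that $T_g\in\mathcal{S}'(\R^n)$ and that $rs, gs\in\mathcal{L}^1(\R^n)$ implicit.
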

\begin{proof}
We prove that $g$ fulfills $\mathcal{F}(g)_{|U_\varepsilon(0)} \equiv 0$ in the distributional sense and apply \cref{thm:fourier}.
We have for all $j \in \N$
\begin{align}
\label{eq:thm:cosinedist:1}
    \cos(\langle \xi_j, x \rangle + \psi_j)
    &=
    \frac{e^{i(\langle \xi_j, x \rangle + \psi_j)} + e^{-i(\langle \xi_j, x \rangle + \psi_j)}}{2}
\notag
    \\
    &=
    \frac{e^{i\psi_j}e^{i\langle \xi_j, x \rangle} + e^{-i\psi_j}e^{i\langle -\xi_j, x \rangle }}{2} \,.
\end{align}
Therefore, for all Schwartz functions $\varphi \in \mathcal{S}(\R^n)$, we get
\begin{align*}
    \mathcal{F}T_g(\varphi)
    &=
    T_g(\mathcal{F}\varphi)
    = \int_{\R^n} g \mathcal{F} \varphi \, \mathrm{d}\lambda
\tag*{\small{\text{(definition distributional Fourier transform)}}}
    \\
    &=
    \sum_{j=1}^\infty a_j \int_{\R^n} \cos(\langle \xi_j, x \rangle + \psi_j) \mathcal{F} \varphi \, \lambda(\mathrm{d}x)
\tag*{\small{\text{(definition $g$, linearity)}}}
    \\
    &=
    \sum_{j=1}^\infty \tfrac{a_j}{2}  \bigg(e^{i\psi_j} \int_{\R^n} e^{i\langle \xi_j, x \rangle} \mathcal{F} \varphi \, \lambda(\mathrm{d}x)
    + e^{-i\psi_j}\int_{\R^n} e^{i\langle -\xi_j, x \rangle} \mathcal{F} \varphi \, \lambda(\mathrm{d}x) \bigg)
\tag*{\small{\text{(\cref{eq:thm:cosinedist:1}, linearity)}}}
    \\
    &=
    \sum_{j=1}^\infty \tfrac{a_j}{2}  \left(e^{i\psi_j} \mathcal{F}^{-1}(\mathcal{F} \varphi)(\xi_j)  + e^{-i\psi_j}\mathcal{F}^{-1}(\mathcal{F} \varphi)(-\xi_j) \right)
\tag*{\small{\text{(definition inverse Fourier transform)}}}
    \\
    &=
    \sum_{j=1}^\infty \tfrac{a_j}{2}  \left(e^{i\psi_j} \varphi(\xi_j)  + e^{-i\psi_j} \varphi(-\xi_j) \right)
\tag*{\small{\text{(Fourier inversion theorem)}}}
\end{align*}
This implies $\mathcal{F}T_g = \sum_{j=1}^\infty \tfrac{a_j}{2}  \big(e^{i\psi_j} \delta_{\xi_j}  + e^{-i\psi_j} \delta_{-\xi_j} \big)$, where $\delta$ is the Dirac distribution. The assumption $\norm{\xi_j} > \varepsilon$ for all $j\in\N$ therefore implies $\mathcal{F}(g)_{|U_\varepsilon(0)} \equiv 0$.
\end{proof}

\cref{cor:cosinedist} also recovers the convexity result from \cref{ex:Rastrigin}.
\begin{corollary}
    The Rastrigin function
    \[
        x \in \R^n \longmapsto f(x) := \norm{x}^2 - \sum_{j=1}^n a_j \cos( \xi_j x_j) \,,
    \]
    where $\xi_j, a_j > 0$ for all $j \in \N_{\leq n}$, is $m^*$-strongly $\mathcal{N}(0, 1)$-weakly convex for all $m^*<1$.
\end{corollary}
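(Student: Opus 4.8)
The plan is to recognize the Rastrigin function as a concrete instance of the deterministic cosine disturbance treated in \cref{cor:cosinedist}, and then to check that the standard normal supplies an admissible density. First I would write $f = r + g$ with strongly convex part $r(x) := \norm{x}^2$ and disturbance $g(x) := -\sum_{j=1}^n a_j \cos(\xi_j x_j)$. The quadratic $r$ is visibly polynomially bounded, and the same computation carried out inside the proof of \cref{thm:strongconv} shows that $r$ obeys the strong convexity inequality with constant $m = 2$ (indeed $r(tv+(1-t)w) = t r(v) + (1-t) r(w) - t(1-t)\norm{v-w}^2$ with equality); in particular $r$ is $m$-strongly convex for any $m \le 2$.

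Next I would cast $g$ into the exact template required by \cref{cor:cosinedist}. Setting the frequency vectors $\Xi_j := \xi_j e_j \in \R^n$ gives $\xi_j x_j = \langle \Xi_j, x\rangle$, so that $g(x) = \sum_{j=1}^n a_j \cos(\langle \Xi_j, x\rangle + \pi)$, matching the corollary's form with phases $\psi_j = \pi$ and coefficients $a_j$. Because the sum is finite, the summability hypothesis $\sum_j \abs{a_j} < \infty$ is immediate, and since each $\xi_j > 0$ the frequencies are bounded away from the origin by $\varepsilon := \min_{j \le n} \xi_j > 0$, which supplies the required bound $\norm{\Xi_j} = \xi_j \ge \varepsilon$ for all $j$.

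Then I would note that $\mathcal{N}(0, I_n)$ has Lebesgue density $x \mapsto (2\pi)^{-n/2}\exp(-\tfrac{1}{2}\norm{x}^2)$, which is a rapidly decreasing (Schwartz) function and hence qualifies as the density $k$ in \cref{cor:cosinedist}. Applying the corollary with $m = 2$ then yields that $f$ is $m^*$-strongly $\mathcal{N}(0,1)$-weakly convex for every $m^* < 2$; since $1 < 2$, this in particular covers all $m^* < 1$, which is the assertion.

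I do not anticipate a genuine obstacle here: the entire argument is a matter of matching notation to an already-established corollary. The only two points that merit a sentence of care are the scalar-to-vector rewriting $\cos(\xi_j x_j) = \cos(\langle \xi_j e_j, x\rangle)$ and the observation that finiteness of the sum makes both hypotheses of \cref{cor:cosinedist} on $(a_j)$ and $(\xi_j)$ trivial. As an entirely self-contained alternative, one could sidestep \cref{cor:cosinedist} and read convexity straight off the closed-form Hessian $\partial_{\theta_j,\theta_k}\E_{\theta,\sigma}(f) = \delta_{jk}\big(2 + a_j\xi_j^2\cos(\xi_j\theta_j)\exp(-\tfrac{1}{2}\sigma^2\xi_j^2)\big)$ from \cref{ex:Rastrigin}, whose diagonal entries converge to $2$ uniformly in $\theta$ as $\sigma \to \infty$, giving strong convexity with any constant below $2$ for all sufficiently large $\sigma$.
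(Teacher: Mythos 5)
Your proposal is correct and follows the same route the paper intends: the corollary is presented as an immediate instance of \cref{cor:cosinedist}, obtained exactly by the decomposition $r(x)=\norm{x}^2$, $g(x)=\sum_{j}a_j\cos(\langle \xi_j e_j,x\rangle+\pi)$, with the finite sum making the summability and frequency-separation hypotheses trivial and the standard Gaussian density serving as the rapidly decreasing kernel. Your observation that $r$ is in fact $2$-strongly convex under the paper's definition even yields the slightly sharper conclusion that $f$ is $m^*$-strongly weakly convex for all $m^*<2$, of which the stated claim for $m^*<1$ is a special case.
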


We also obtain the following result for stochastic disturbances. From the algorithmic perspective, the following model constitutes static noise.

\begin{corollary}[Stochastic Cosine Disturbance]
    Let $f=r+g: \R^n \to \R$ be the sum of a polynomially bounded, $m$-strongly convex function $r$ and a random field
    \[
        g(x) = \sum_{j = 1}^\infty A_j \cos(\langle \xi_j, x \rangle + \psi_j)
    \]
    with 
    \begin{itemize}
        \item $(A_j)_{j\in\N}$ are real-valued random variables on the same probability space such that $\sum_j |A_j| < C$ with probability $p \in [0,1]$, and
        \item $\xi_j \in \R^n$ such that there exists $\varepsilon > 0$ with $\norm{\xi_j} \geq \varepsilon$ for all $j \in \N$.
    \end{itemize}
    Let $\Ps{k}$ be a probability measure with a rapidly decreasing Lebesgue density $k$ and $m^*<m$. Then, $f$ is $m^*$-strongly $\Ps{k}$-weakly convex with at least probability $p$ with the same threshold scaling constant $\sigma^*$. Further, $\sigma^*(C) \in O\big(\sqrt[d]{C}\big)$ for all $d \in \N$.
\end{corollary}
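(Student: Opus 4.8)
The plan is to reduce the stochastic claim to the deterministic \cref{cor:cosinedist} by conditioning, and then to reopen the quantitative mechanics behind \cref{thm:fourier,lem:fourier1} to control the threshold's dependence on $C$. First I would fix the event $E := \{\, \sum_{j} \abs{A_j} < C \,\}$, which has probability $p$ by assumption. For each $\omega \in E$ the realization $g(\omega, \cdot) = \sum_{j} A_j(\omega) \cos(\langle \xi_j, \cdot\rangle + \psi_j)$ is a deterministic function with summable coefficients and frequencies satisfying $\norm{\xi_j} \geq \varepsilon$, i.e., exactly the situation of \cref{cor:cosinedist}. Its distributional Fourier transform is therefore $\mathcal{F}T_{g(\omega)} = \sum_{j} \tfrac{A_j(\omega)}{2}\big(e^{i\psi_j}\delta_{\xi_j} + e^{-i\psi_j}\delta_{-\xi_j}\big)$, supported in $\R^n \setminus U_\varepsilon(0)$, so \cref{thm:fourier} yields $m^*$-strong $\Ps{k}$-weak convexity for that realization. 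As the set of $\omega$ on which $f$ is weakly convex contains $E$, weak convexity holds with probability at least $p$.

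Next I would establish that the threshold $\sigma^*$ may be chosen uniformly over all $\omega \in E$, depending only on $C$. Here I cannot treat \cref{lem:fourier1} as a black box, since it is stated only as a qualitative limit. Instead, writing $h_{ij} := c_k\, \partial_{ij}k \in \mathcal{S}(\R^n)$ and $s_{ij} := \mathcal{F}^{-1}(h_{ij})$, its computation gives $\partial_{\theta_i,\theta_j}\E_{\theta,\sigma}(g) = \sigma^{-2}\,\mathcal{F}T_g\big(e^{i\langle \theta, \cdot\rangle} s_{ij}(\sigma\,\cdot)\big)$; evaluating against the Dirac masses above bounds its modulus by $\sigma^{-2}\sum_j \abs{A_j}\big(\abs{s_{ij}(\sigma\xi_j)} + \abs{s_{ij}(-\sigma\xi_j)}\big)$. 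Since $s_{ij}$ is Schwartz, for every $N \in \N$ there is $C_N$ with $\abs{s_{ij}(x)} \leq C_N(1+\norm{x})^{-N}$; combined with $\norm{\xi_j} \geq \varepsilon$ and $\sum_j \abs{A_j} < C$ this gives the realization-independent estimate $\abs{\partial_{\theta_i,\theta_j}\E_{\theta,\sigma}(g)} \leq 2C\,C_N\,\sigma^{-2}(1+\sigma\varepsilon)^{-N}$, which depends on $\omega$ only through $\sum_j\abs{A_j}$ and is hence uniform over $E$.

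Finally I would convert this into the claimed scaling. As in \cref{thm:fourier}, it suffices to force every second partial derivative of $\E_{\cdot,\sigma}(g)$ below $(m-m^*)/n^2$; imposing $2C\,C_N\,\sigma^{-2}(1+\sigma\varepsilon)^{-N} \leq (m-m^*)/n^2$ and solving for $\sigma$ produces a threshold of order $C^{1/(N+2)}$, and taking the maximum over the finitely many pairs $(i,j)$ preserves this order. Because $s_{ij}$ is Schwartz, $N$ may be taken arbitrarily large, so for any fixed $d \in \N$ choosing $N \geq d$ yields $\sigma^*(C) \in O\big(C^{1/(N+2)}\big) \subseteq O\big(\sqrt[d]{C}\big)$, as claimed. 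I expect this last quantitative passage to be the main obstacle: turning the purely qualitative convergence of \cref{lem:fourier1} into an explicit, realization-uniform rate in $C$ is precisely what the deterministic corollary never required.
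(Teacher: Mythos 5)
Your proposal is correct and follows essentially the same route as the paper: both exploit the Dirac-mass representation $\mathcal{F}T_g = \sum_j \tfrac{A_j}{2}\big(e^{i\psi_j}\delta_{\xi_j} + e^{-i\psi_j}\delta_{-\xi_j}\big)$ on the probability-$p$ event, bound the pairing with the rescaled Schwartz test function by $C$ times a rapidly decreasing quantity in $\sigma$, and solve for $\sigma$ to obtain $\sigma^*(C) \in O\big(\sqrt[d]{C}\big)$. You are in fact somewhat more explicit than the paper, which argues "in the spirit of" \cref{thm:fourier} and \cref{lem:fourier1} where you spell out the identity $\partial_{\theta_i,\theta_j}\E_{\theta,\sigma}(g) = \sigma^{-2}\,\mathcal{F}T_g\big(e^{i\langle\theta,\cdot\rangle}s_{ij}(\sigma\cdot)\big)$ and the realization-uniform estimate, but this is a matter of detail rather than a different argument.
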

\begin{proof}
    Analogously to \cref{thm:fourier} and \cref{lem:fourier1} the result follows in conjunction with the following bound. For all Schwartz functions $\varphi \in \mathcal{S}(\R^n)$, we have with probability $p$
    \begin{align*}
        \mathcal{F}T_g(\varphi)
        &=
        \sum_{j=1}^\infty \tfrac{A_j}{2}  \left(e^{i\psi_j} \varphi(\xi_j)  + e^{-i\psi_j} \varphi(-\xi_j) \right)
    \tag*{\small{\text{(proof of \cref{cor:cosinedist})}}}
        \\
        \implies
        \abs{\mathcal{F}T_g(\varphi)}
        &\leq
        \sum_{j=1}^\infty \tfrac{\abs{A_j}}{2}  \left(
        \abs{\varphi(\xi_j)}  + \abs{\varphi(-\xi_j)} \right)
    \tag*{\small{\text{(triangle inequality)}}}
        \\
        &\leq
        \sum_{j=1}^\infty \abs{A_j}
        \sup_{x \in U_\varepsilon(0)^C} \abs{\varphi(x)} 
    \tag*{\small{\text{($\varphi$ is Schwartz)}}}
        \\
        &\leq C \sup_{x \in U_\varepsilon(0)^C} \abs{\varphi(x)} \,.
    \tag*{\small{\text{(assumption $\sum_j |A_j| < C$)}}}
    \end{align*}
    By the fact that $\varphi$ is rapidly decreasing (see \cref{lem:fourier1} for why $\varphi$ can be considered fixed), we also have
    \[
        C \sup_{x \in U_\varepsilon(0)^C} \abs{\varphi(\sigma x)} \leq C \tfrac{B}{\sigma^d}
    \]
    for some $B \geq 0$ and for all $\sigma > 0$ and for all $d \in \N$. Therefore, as we want to pick $\sigma^*$ to bound the value of $\abs{\mathcal{F}T_g(\sigma^*\varphi)}$ by say some $c>0$ dependent on the convexity parameter of $r$, in the spirit of \cref{thm:fourier} and \cref{lem:fourier1}, we have $\sigma^*(C) \in O\big(\sqrt[d]{C}\big)$ for all $d \in \N$ due to
    \[
        C \tfrac{B}{\sigma^d} = c \iff \sigma = \sqrt[d]{B/c} \sqrt[d]{C} \,.
    \]
\end{proof}

\section{Conclusion}

We established that stochastic relaxations have many favorable properties for optimization in surprisingly general settings. Concretely, we provided mild conditions under which optima are persistent under stochastic relaxation and thereby established the approach's consistency, which aligns with our intuition. Next to consistency, we have also shown that probability measures with Lipschitz differentiable densities under mild regularity conditions result in relaxations with Lipschitz differentials, allowing favorable convergence properties for first-order optimization methods. However, the arguably most important contributions are the convexity conditions for stochastic relaxations derived in the last section, which offer the possibility of principled global optimization under high-frequency, even stochastic, disturbances. Beyond that, we hope that the insight developed will be useful in furthering state-of-the-art optimization methods.

\paragraph{Future Work.} We believe that some of the most promising directions for future work lie in extending the convexity results for stochastic relaxations based on other methods from harmonic analysis. In practice, particularly in evolution strategies, the decoupling of the relaxation and (numerical) integration methods suggested by the setting at hand could offer the opportunity to design many new optimization methods. The gradient representation of relaxations of differentiable functions likely has powerful applications in problems where gradients are available, such as in gray-box problems in machine learning. Whether convexity results for relaxations hold for probability measures with a support on Lebesgue null sets may also pose an interesting question. These measures can offer strongly reduced computational cost in numerical integration---related algorithms have recently yielded spectacular results \cite{loshchilov2019lmmaes,choro2019asebo}. Connecting the convexity results with existing approaches to control the scaling of probability measures during an optimization process is also an important next step.
\newpage

\printbibliography
\end{document}